\theoremstyle{plain}
\newtheorem{theo}{Theorem}[section]
\newtheorem*{theo*}{Theorem}
\newtheorem{coro}[theo]{Corollary}
\newtheorem{prop}[theo]{Proposition}
\newtheorem{lemm}[theo]{Lemma}
\newtheorem{theomain}[theo]{Theorem}
\theoremstyle{definition}
\newtheorem{defi}[theo]{Definition}
\newtheorem{rema}[theo]{Remark}
\newtheorem{exem}[theo]{Example}
\newcommand*{\dd}%
  {\relax\ifnum\lastnodetype>0\mskip\medmuskip\fi\mathrm{d}}
\newcommand{\fspace}[1]{\mathcal{#1}}
\newcommand{\fX}{\fspace{X}}
\newcommand{\op}[1]{\mathrm{#1}}
\newcommand{\one}{\boldsymbol{1}}
\newcommand{\mchain}{\mathsf}
\newcommand{\Lip}{\operatorname{Lip}}
\newcommand{\BV}{\operatorname{BV}}
\newcommand{\fXO}{\operatorname{\fspace{X}}(\Omega)}
\newcommand{\diam}{\operatorname{\mathrm{diam}}}
\newcommand{\spacem}[1]{\mathcal{#1}}
\newcommand{\im}{\operatorname{Im}}
\newcommand{\Hol}{\operatorname{Hol}}
\newlength{\hypobox}
\newlength{\gapbox}
\newcounter{hypo}
\renewcommand{\thehypo}{\textup{(H\arabic{hypo})}}
\newenvironment{hypo}{\refstepcounter{hypo}%
  \begin{list}{}{}\item[\textbf{\thehypo}]}{\end{list}}
\newcounter{hypop}
\renewcommand{\thehypop}{\textup{(H\arabic{hypop}')}}
\newenvironment{hypop}{\setcounter{hypop}%
                       {\value{hypo}-1}\refstepcounter{hypop}%
     \begin{list}{}{}\item[\textbf{\thehypop}]}{\end{list}}
\newcounter{gap}
\renewcommand{\thegap}{\textup{(SG\arabic{gap})}}
\newenvironment{gap}{\refstepcounter{gap}%
  \begin{list}{}{}\item[\textbf{\thegap}]}{\end{list}}
\title{Effective high-temperature estimates for intermittent maps}
\author{Beno\^{\i}t R. Kloeckner \thanks{Universit\'e Paris-Est, Laboratoire d'Analyse et de Mat\'ematiques Appliqu\'ees (UMR 8050), UPEM, UPEC, CNRS, F-94010, Cr\'eteil, France}}
\begin{document}

\maketitle

\begin{abstract}
Using quantitative perturbation theory for linear operators, we prove
spectral gap for transfer operators of various families of intermittent maps with almost constant potentials (``high-temperature'' regime). H\"older and bounded $p$-variation potentials are treated, in each case under a suitable assumption on the map, but the method should apply more generally. It is notably proved that for any Pommeau-Manneville map, any potential with Lispchitz constant less than $0.0014$ has a transfer operator acting on $\Lip([0,1])$ with a spectral gap; and that for any $2$-to-$1$ unimodal map, any potential with total variation less than $0.0069$ has a transfer operator acting on $\BV([0,1])$ with a spectral gap.
We also prove under quite general hypotheses that the classical definition of spectral gap coincides with the formally stronger one used in \cite{GKLM}, allowing all results there to be applied under the high temperature bounds proved here: analyticity of pressure and equilibrium states, central limit theorem, etc.
\end{abstract}

\section{Introduction}

The thermodynamical formalism, which provides a deep understanding of invariant measures of some topological dynamical systems, is by now quite well understood in the uniformly hyperbolic setting with sufficiently regular potentials. However, the non-uniformly hyperbolic setting presents many challenges and is under a lot of scrutiny. Here we shall restrict to ``intermittent'' maps, which are expanding in certain zones but have a neutral fixed point or even a contracting behavior in other regions.

For some intermittent maps such as the Pommeau-Manneville family, on the one hand the absolutely continuous measure only exhibits polynomial decay of correlation \cite{Hu, Sarig}, but on the other hand Ruelle-Perron-Frobenius\footnote{The term \emph{Gibbs measure} is often used, including in some of my previous works, but conflicts with many related but subtly different concepts.} measures of potentials with sufficiently small H\"older or $C^k$ norm (``high-temperature regime'') have exponential decay of correlation, following from a spectral gap for their transfer operator \cite{CV}.

In view of this diversity of behavior, it is desirable to better understand where lies the frontier between this two regimes, polynomial versus exponential decay of correlations. As a small step in this direction, we shall prove completely explicit lower bounds on the size of the high-temperature regime. These bounds are certainly far from being sharp for most single map we consider, but they are uniform over rather large families of maps.

The idea is simply to use spectral theory of perturbed linear operators: one only has to prove spectral gap for a \emph{constant} potential, which is usually easy, and then conclude by stability of the spectral gap property. The important additional point is that we use here an \emph{effective} perturbation theory, leading to effective bounds. The present approach also has the advantage of simplicity (see the proofs of Theorems \ref{theo:Hol} and \ref{theo:BV}).

\paragraph{Transfer operators.}

Recall that given a finite-to-one dynamical system $T:\Omega\to\Omega$, where $\Omega$ is a metric space, and a potential $\varphi:\Omega\to\mathbb{R}$ in a suitable Banach algebra of functions $\fX(\Omega)$, one defines a \emph{transfer operator} $\op{L}_{T,\varphi}$ which is a bounded linear operator from $\fX(\Omega)$ to itself, mapping $f$ to the sum (or average) of $e^\varphi f$ along inverse images under $T$. When $T$ is $k$-to-one for some $k\in\mathbb{N}$ we shall take the ``average'' point of view and set
\[\op{L}_{T,\varphi} f(x) = \frac{1}{k} \sum_{y\in T^{-1}(x)} e^{\varphi(y)} f(y).\]
More generally, we will consider the case when there is some probability transition kernel $\mchain{M}=(m_x)_{x\in\Omega}$ (i.e.  for all $x\in\Omega$ $m_x$ is a probability measure) such that $m_x$ is concentrated on $T^{-1}(x)$ (sometimes up to some ``negligible'' set, see Remark \ref{rema:RPF} below) and set
\[\op{L}_{\op{M},\varphi} f(x) = \int_\Omega  e^{\varphi(y)} f(y) \dd m_x(y).\]
The above $k$-to-one case is included in this framework by setting $m_x=\frac1k \sum_{y\in T^{-1}(x)} \delta_{y}$. The subscripts $\mchain{M}$ and $\mchain{T}$  will most often be kept implicit or interchanged depending on the focus we want to choose, and it should be clear from the context what the implicit object is.

The dual $\op{L}_\varphi^*$ of $\op{L}_{\varphi}$ preserves the set of finite positive measures, and under suitable assumptions $\op{L}_{\varphi}$ can be shown to have a maximal eigenvalue $\lambda_\varphi$, a unique positive eigenfunction $h_\varphi$ and $\op{L}_\varphi^*$ to have a unique positive eigenprobability $\nu_\varphi$. They can then be used to construct a $T$-invariant positive measure $\dd\mu_\varphi=h_\varphi \dd\nu_\varphi$ (the normalization of $h_\varphi$ being taken to ensure $\mu_\varphi$ is a probability) which we shall call the \emph{Ruelle-Perron-Frobenius} (RPF) measure of the potential $\varphi$.

The transfer operator is thus an important tool in the study of invariant measures, and its spectral property are crucial. In particular, if $\op{L}_{T,\varphi}$ has a spectral gap below $\lambda_\varphi$ one easily obtains an exponential decay of correlations:
\[ \Big\lvert \int f\circ T^n \cdot g \dd\mu_\varphi -\big(\int f\dd\mu_\varphi\big)\big(\int g\dd\mu_\varphi\big)\Big\rvert =O (e^{-a n}) \quad\forall f,g\in\fX(\Omega)\]
which means that if $X$ is a random point drawn according to $\mu_\varphi$, for any sufficiently regular $f,g$ the number $g(X)$  is ``almost independent'' from $f(T^n(X))$ when $n$ is large. This kind of properties have countless application in the study of ergodic properties of $T$, and it is thus desirable to know when a spectral gap occur.

\begin{rema}\label{rema:RPF}
The invariance of $\mu_\varphi$ is proved as follows. For all $f\in\fX(\Omega)$ (where $\fX(\Omega)$ is assumed to contain enough functions to approximate uniformly all continuous functions) it holds:
\begin{equation}
\int f \dd T_*(\mu_\varphi)
  = \int f\circ T\cdot h_\varphi \dd\nu_\varphi 
  = \int f\circ T\cdot h_\varphi \dd\Big(\frac{1}{\lambda_\varphi}\op{L}_\varphi^*\nu_\varphi\Big) 
  = \frac{1}{\lambda_\varphi}\int \op{L}_\varphi\big(f\circ T\cdot h_\varphi \big) \dd\nu_\varphi
\label{eq:invariance}
\end{equation}
where
\[
\op{L}_\varphi(f\circ T\cdot h_\varphi)(x) = \int e^{\varphi(y)} f(T(y)) h_\varphi(y) \dd m_x(y) = f(x) \int e^{\varphi(y)} h_\varphi(y) \dd m_x(y)
\]
the last equality using that $m_x$ is concentrated on $T^{-1}(x)$. But to plug this into \eqref{eq:invariance}, \emph{we only need this for $\nu_\varphi$-almost all $x$}. Then we get 
\[\int f \dd T_*(\mu_\varphi)
  = \frac{1}{\lambda_\varphi}\int f \, \op{L}_\varphi(h_\varphi) \dd\nu_\varphi = \int f \dd\mu_\varphi.
\]
The relevance of this relaxation comes from examples such as the Pommeau-Manneville family $(T_q)_{q>0}$ below, which are not strictly speaking $k$-to-one; see Example \ref{exem:subtle}.
\end{rema}

\paragraph{Intermittent maps.}

It is known in a broad generality that if $T$ is uniformly expanding (or hyperbolic) and $\varphi$ is H\"older, then there is a spectral gap for the transfer operator acting on H\"older functions. Investigations have thus turned toward non-uniformly expanding maps (or less regular potentials); one particular class of such maps often plays the role of test case: the Pommeau-Manneville family
\begin{align*}
T_q : [0,1] &\to  [0,1] \\
     x & \mapsto \begin{cases} x(1+(2x)^q) & \mbox{if } x\in[0,\frac12) \\
                               2x-1 &\mbox{if } x\in [\frac12,1] \end{cases}
\end{align*}
where $q\in (0,+\infty)$ is a parameter quantifying the tangency to identity at the neutral point $0$.\footnote{Variants acting on the circle, which can then be made $C^1$ with derivative of the form $1+O(x^q)$ at the fixed point $0$ might be preferred by some readers, and this makes no difference in our result.}
This type of maps is sometime called ``intermittent'', since the dynamics is close to the dynamics of a uniformly expanding map until the orbit gets close to the neutral fixed point: then the orbit takes a long time to go away from the fixed point.

Let us state a specialized version of our first main result (we obtain below similar bounds for all $\alpha$-H\"older potential, and for more general maps, possibly acting on higher-dimensional spaces).
\begin{theomain}\label{theo:mainHol}
For any $q>0$ and any potential $\varphi\in \Lip(\Omega)$ such that $\Lip(\varphi)\le 0.0014$, the transfer operator $\op{L}_{T_q,\varphi}$ has a spectral gap when acting on $\Lip(\Omega)$.
\end{theomain}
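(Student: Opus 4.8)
The plan is to realize the transfer operator $\op{L}_{T_q,\varphi}$ as a perturbation of the transfer operator $\op{L}_{T_q,0}$ associated to the \emph{constant} (zero) potential, and then to invoke an effective perturbation theorem for the spectral gap. Concretely, I would first establish the spectral gap for $\op{L}_{T_q,0}$ acting on $\Lip(\Omega)$; for the constant potential the transfer operator is simply the averaging operator $\op{L}_0 f(x) = \frac12\sum_{y\in T_q^{-1}(x)} f(y)$, which preserves constants (so $\one$ is an eigenfunction with eigenvalue $\lambda_0 = 1$) and for which a Lasota--Yorke / Doeblin--Fortet inequality on $\Lip(\Omega)$ can be checked directly from the geometry of the two inverse branches. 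The key quantitative inputs here are the contraction rates and distortion bounds of the branches of $T_q$, which must be controlled \emph{uniformly in $q$} so that the resulting spectral gap $\delta_0$ and the norm bounds are independent of the parameter.

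Second, I would quantify how far $\op{L}_{T_q,\varphi}$ lies from $\op{L}_{T_q,0}$ in operator norm on $\Lip(\Omega)$. Writing $\op{L}_{\varphi} = \op{L}_0 + (\op{L}_\varphi - \op{L}_0)$, one estimates the operator norm of the difference in terms of $\lVert e^\varphi - 1\rVert$ and $\Lip(e^\varphi - 1)$, both of which are controlled by $\Lip(\varphi)$ once $\varphi$ is normalized (we may subtract a constant from $\varphi$ without changing the dynamics of the normalized operator, so only $\Lip(\varphi)$ matters). The elementary bound $\lvert e^{\varphi(y)} - 1\rvert \le \Lip(\varphi)\cdot\diam(\Omega) \cdot e^{\Lip(\varphi)\diam(\Omega)}$ on the fibers, together with the Lipschitz estimate for $e^\varphi$, yields $\lVert \op{L}_\varphi - \op{L}_0 \rVert_{\Lip\to\Lip} \le C\,\Lip(\varphi)$ for an explicit constant $C$ coming from the branch geometry. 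This is the step where the precise numerical threshold $0.0014$ is produced: it is the value of $\Lip(\varphi)$ for which $C\,\Lip(\varphi)$ falls below the perturbation tolerance allowed by the gap $\delta_0$ of the unperturbed operator.

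Third, I would apply the effective perturbation result (the quantitative stability of the spectral gap referred to in the introduction): if $\op{L}_0$ has a spectral gap of size $\delta_0$ with controlled resolvent on the complementary part of the spectrum, then any bounded operator within distance $\eta$ of $\op{L}_0$, with $\eta$ below an explicit function of $\delta_0$ and the resolvent bound, also has a spectral gap. Plugging in the estimate $\eta = C\,\Lip(\varphi)$ and the explicit $\delta_0$ obtained in the first step gives the admissible range of $\Lip(\varphi)$. The overall structure is thus: (i) uniform Lasota--Yorke bound and spectral gap for the averaging operator, (ii) explicit operator-norm perturbation estimate linear in $\Lip(\varphi)$, (iii) effective perturbation theorem. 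A technical subtlety flagged already in Remark \ref{rema:RPF} and Example \ref{exem:subtle} is that $T_q$ is not literally $2$-to-$1$ because of the behavior at the endpoints, so the transfer operator must be interpreted through the kernel $\mchain{M}$ and identities need only hold $\nu_\varphi$-almost everywhere; I would keep this in mind when setting up $\op{L}_0$ but it does not affect the norm estimates.

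The main obstacle I anticipate is the first step, specifically obtaining a spectral gap for $\op{L}_{T_q,0}$ on $\Lip(\Omega)$ with constants that are genuinely \emph{uniform} as $q\to\infty$ (the tangency to the identity at $0$ becomes arbitrarily degenerate) and as $q\to 0$. The neutral fixed point means the contracting branch of $T_q^{-1}$ near $0$ is only weakly contracting, so the naive Lasota--Yorke constant degenerates; the crux will be to show that despite this, the averaging still forces a uniform gap, presumably by exploiting that the problematic branch carries mass $\tfrac12$ but is compensated by good mixing from the other branch, or by passing to a sufficiently high iterate. Extracting from this analysis a single explicit $\delta_0$ valid for \emph{all} $q>0$ — rather than a $q$-dependent estimate — is the delicate heart of the argument and is what ultimately determines whether the clean threshold $0.0014$ can be stated uniformly.
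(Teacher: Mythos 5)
Your overall architecture --- compare $\op{L}_{T_q,\varphi}$ with the zero-potential averaging operator $\op{L}_0$, prove a Doeblin--Fortet/Lasota--Yorke spectral gap for $\op{L}_0$, bound $\lVert\op{L}_\varphi-\op{L}_0\rVert$ after normalizing $\varphi$ by a constant, and conclude by the effective perturbation theorem --- is exactly the paper's (Theorem \ref{theo:Hol}, via Lemmas \ref{lemm:gap}, \ref{lemm:normofpi} and Theorem \ref{theo:perturbation}). However, the step you single out as the ``delicate heart'' and leave open --- a spectral gap for $\op{L}_0$ on $\Lip([0,1])$ uniform in $q$ --- is a genuine gap in your argument, and your diagnosis of it is off target. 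There is no degeneration to fight and no need for high iterates: the relevant notion is backward contraction \emph{on average} (Definition \ref{defi:classH}). One does not need each inverse branch to contract strictly, only the \emph{average} of the branch Lipschitz constants to be $<1$. For every $q>0$ the expanding branch of $T_q$ has derivative $1+(q+1)(2x)^q\ge 1$ on $[0,\tfrac12)$, so its inverse branch $b_1$ is $1$-Lipschitz uniformly in $q$ (the neutral fixed point only prevents it from being a \emph{strict} contraction, which is irrelevant); the other inverse branch $b_2(x)=(x+1)/2$ is exactly $\tfrac12$-Lipschitz. Hence for all $y,z$,
\[\frac12\big(\lvert b_1(y)-b_1(z)\rvert+\lvert b_2(y)-b_2(z)\rvert\big)\le \frac12\Big(1+\frac12\Big)\lvert y-z\rvert,\]
so $\Lip(\op{L}_0 f)\le\frac34\Lip(f)$ in a single step, for every $q>0$. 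This is exactly what you gestured at with ``the problematic branch carries mass $\tfrac12$ but is compensated'' --- but the compensation is a one-line triangle inequality on branch Lipschitz constants, not a mixing argument, and the naive Lasota--Yorke inequality does not degenerate at all once stated in averaged form.

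Once $\theta=\tfrac34$ is in hand, all constants are pinned down and your steps (ii) and (iii) go through as the paper does them: Lemma \ref{lemm:gap} applies with $D=1$ (a Lipschitz function with $\mu(f)=0$ vanishes somewhere, so $\lVert f\rVert_\infty\le\Lip(f)\diam\Omega$), giving $\delta_0=(1-\theta)/(1+\theta)=\tfrac17$ with constant $1$; Lemma \ref{lemm:normofpi} gives $\lVert\pi_0\rVert\le\tfrac43$; and $\tau_0=1$ since $\one$ is the eigenfunction and the dual eigenvector is a probability. Theorem \ref{theo:perturbation} then tolerates $\lVert\op{L}_\varphi-\op{L}_0\rVert<\delta_0^2/\big(6(1+\delta_0)\cdot\tfrac43\big)=(1-\theta)^2/\big(16(1+\theta)\big)=\tfrac{1}{448}$, while the Banach-algebra estimate gives $\lVert\op{L}_\varphi-\op{L}_0\rVert\le e^{\lVert\varphi-c\rVert_{\Hol_1}}-1$ with $\lVert\varphi-c\rVert_{\Hol_1}\le\tfrac32\Lip(\varphi)$ for the midrange constant $c$ (your claimed bound linear in $\Lip(\varphi)$ is a harmless variant of this). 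The threshold $\tfrac23\log\big(1+\tfrac{1}{448}\big)\ge 0.0014$ then drops out. In short: the missing idea is the average-contraction computation above; without it your plan cannot produce a $q$-independent $\delta_0$, hence cannot produce the number $0.0014$.
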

The independence to $q$ in the case of the Pommeau-Manneville family was already known (see the dependencies of $\varepsilon_\phi$ in \cite{CV}), and comes from the fact that they exhibit a backward \emph{average} contraction rate uniformly bounded by the presence of a strictly contracting branch. Note that for any value of the Lipschitz constant less than $0.0014$ the method provides an explicit lower bound for the size of the spectral gap.\\

Another important class of maps is the class of unimodal interval maps. We shall consider only maps with an (almost) constant number of inverse images, and thus restrict to unimodal maps which are $2$-to-$1$ (counted with multiplicity).

For this class, the bounded ($p$-)variation class of regularity will prove extremely efficient. There, the derivative plays little role, since bounded $p$-variation is ``insensitive to stretching''. We instead rely on its ``extensiveness'' (total variation is no more than the sum of variations over tiles of a tiling of the phase space) to deduce a general result from which we extract this simply stated version (we use $\BV([a,b])$ to denote the space of bounded variation function defined on $[a,b]$, and $\BV(\varphi)$ to denote the total variation of such a function $\varphi$).
\begin{theomain}\label{theo:mainBV}
Let $T:[a,b]\to [a,b]$ be a continuous interval map which is increasing from a subinterval $[a,c]$ onto $[a,b]$ and decreasing from $[c,b]$ onto $[a,b]$. For all potential $\varphi\in\BV([a,b])$ such that
\[\BV(\varphi) \le 0.0069\]
the transfer operator $\op{L}_{T,\varphi}$ has a spectral gap when acting on the space of bounded variation functions.
\end{theomain}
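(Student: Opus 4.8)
The plan is to treat $\op{L}_{T,\varphi}$ as an effective perturbation of the reference operator $\op{L}_{T,0}$ attached to the constant potential $0$, for which a spectral gap is essentially immediate. Writing $g_1\colon[a,b]\to[a,c]$ and $g_2\colon[a,b]\to[c,b]$ for the two monotone inverse branches of $T$, the full-branch hypothesis gives
\[
\op{L}_{T,0}f(x)=\tfrac12\bigl(f(g_1(x))+f(g_2(x))\bigr),
\]
so that $\op{L}_{T,0}\one=\one$. Since each $g_i$ is a monotone homeomorphism onto its image, total variation is invariant under the reparametrisation $f\mapsto f\circ g_i$, whence $\var(f\circ g_i,[a,b])=\var(f,g_i([a,b]))$. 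Using subadditivity of the variation for the sum and additivity over the adjacent intervals $[a,c]$ and $[c,b]$, I obtain the clean Lasota--Yorke inequality
\[
\BV(\op{L}_{T,0}f)\le\tfrac12\bigl(\var(f,[a,c])+\var(f,[c,b])\bigr)=\tfrac12\,\BV(f),
\]
with no lower-order term precisely because both branches are onto. Iterating gives $\BV(\op{L}_{T,0}^n f)\le 2^{-n}\BV(f)$; combined with $\op{L}_{T,0}\one=\one$ and a fixed eigenprobability $\nu_0$ for $\op{L}_{T,0}^*$, this yields the invariant splitting $\BV=\mathbb{R}\one\oplus\{f:\int f\dd\nu_0=0\}$ on which the variation seminorm is contracted by $\tfrac12$. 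As any zero-$\nu_0$-integral function satisfies $\lVert f\rVert_\infty\le\BV(f)$, this is a genuine spectral gap of size $\tfrac12$ for $\op{L}_{T,0}$ on $\BV([a,b])$.

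Next I would control the perturbation. Using $\op{L}_{T,\varphi}f=\op{L}_{T,0}(e^\varphi f)$ one factors
\[
\op{L}_{T,\varphi}-\op{L}_{T,0}=\op{L}_{T,0}\circ M_{e^\varphi-1},
\]
where $M_\psi$ denotes multiplication by $\psi$. Since adding a constant to $\varphi$ only rescales $\op{L}_{T,\varphi}$ and leaves the gap property and $\BV(\varphi)$ unchanged, I may normalise $\varphi$ so that $\lVert\varphi\rVert_\infty\le\BV(\varphi)$. Equipping $\BV([a,b])$ with the Banach-algebra norm $\lVert f\rVert_{\BV}=\lVert f\rVert_\infty+\BV(f)$, the multiplication operator obeys $\lVert M_{e^\varphi-1}\rVert_{\BV\to\BV}\le\lVert e^\varphi-1\rVert_{\BV}$, and the composition-with-an-exponential estimates
\[
\lVert e^\varphi-1\rVert_\infty\le e^{\lVert\varphi\rVert_\infty}-1,\qquad \BV(e^\varphi-1)\le e^{\lVert\varphi\rVert_\infty}\BV(\varphi)
\]
make $\lVert e^\varphi-1\rVert_{\BV}$ an explicit, small function of $\BV(\varphi)$. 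Because $\op{L}_{T,0}$ is an average it satisfies $\lVert\op{L}_{T,0}\rVert_{\BV\to\BV}\le1$, so $\lVert\op{L}_{T,\varphi}-\op{L}_{T,0}\rVert_{\BV\to\BV}\le\lVert e^\varphi-1\rVert_{\BV}$.

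Finally I would feed the reference gap of size $\tfrac12$ and this operator-norm bound into the effective perturbation theory described above: the stability threshold is an explicit function of the gap and of $\lVert\op{L}_{T,0}\rVert_{\BV\to\BV}$, and imposing that $\lVert e^\varphi-1\rVert_{\BV}$ lie below it translates, via the two exponential estimates, into a concrete bound on $\BV(\varphi)$, from which the numerical constant $0.0069$ emerges. The conceptual core is the one-line variation contraction of the first step; the main obstacle is not any single difficult argument but the careful propagation of constants through the algebra norm, the exponential bounds, and the perturbation threshold, so that the resulting inequality is both correct and as sharp as the method allows.
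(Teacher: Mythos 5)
Your overall route is the same as the paper's: perturb off the zero-potential operator $\op{L}_{T,0}$, prove a Lasota--Yorke contraction of the variation, bound $\lVert\op{L}_{T,\varphi}-\op{L}_{T,0}\rVert$ through the Banach-algebra/exponential estimate, and invoke the effective perturbation theorem. Your contraction step is correct (it is exactly the case $k=2$, $p=1$ of the paper's inequality \eqref{eq:BV}), and your bound on $\lVert e^\varphi-\one\rVert_{\BV}$ is fine. The genuine gap is where you declare ``a genuine spectral gap of size $\tfrac12$''. Contraction of the \emph{seminorm} $\BV$ by $\tfrac12$ does not give a gap of size $\tfrac12$ with \emph{constant $1$} in the norm $\lVert f\rVert_{\BV}=\lVert f\rVert_\infty+\BV(f)$: on $\ker\nu_0$, writing $t=\lVert f\rVert_\infty/\lVert f\rVert_{\BV}$, the available estimates only give $\lVert\op{L}_{T,0}f\rVert_{\BV}\le\min\bigl(t+\tfrac{1-t}{2},\,1-t\bigr)\lVert f\rVert_{\BV}$, whose worst case (at $t=\tfrac13$) is $\tfrac23$. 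So the constant-$1$ gap one can certify is $\delta_0=\tfrac13$ (this is precisely the paper's Lemma \ref{lemm:gap}, $\delta_0=\frac{1-\theta}{1+D\theta}$ with $\theta=\tfrac12$, $D=1$); your size $\tfrac12$ holds only with constant $2$, via $\lVert\op{L}_{T,0}^n f\rVert_{\BV}\le 2\cdot2^{-n}\lVert f\rVert_{\BV}$. This distinction is not cosmetic, because the effective perturbation theorem (Theorem \ref{theo:perturbation}, i.e.\ Corollary 2.12 of \cite{K:perturbation}) \emph{requires} constant $1$; moreover its threshold involves two quantities you never estimate, the condition number $\tau_0$ and the projection norm $\lVert\pi_0\rVert$ -- it is not a function of the gap and $\lVert\op{L}_{T,0}\rVert$ alone, as you assert. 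The paper computes $\tau_0=1$ (eigenfunction $\one$, eigenprobability $\nu_0$) and $\lVert\pi_0\rVert\le\tfrac43$ (Lemma \ref{lemm:normofpi}), yielding the radius $\frac{\delta_0^2}{6(1+\delta_0)\tau_0\lVert\pi_0\rVert}=\frac{1}{96}$.

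The numerology confirms that, as written, your argument cannot produce the stated constant. Feeding your $\delta_0=\tfrac12$ into the theorem (which is illegitimate, by the constant-$1$ issue) would give radius $\tfrac1{48}$ and a bound near $0.0137$; conversely, with the correct radius $\tfrac1{96}$ your normalisation $\lVert\varphi\rVert_\infty\le\BV(\varphi)$ only gives $\lVert\varphi\rVert_{\BV}\le2\BV(\varphi)$, hence the insufficient bound $\BV(\varphi)\le\tfrac12\log\bigl(1+\tfrac1{96}\bigr)\approx0.0052<0.0069$. To reach $0.0069$ you need both of the paper's refinements: the constant-$1$ gap $\delta_0=\tfrac13$ together with $\tau_0=1$, $\lVert\pi_0\rVert\le\tfrac43$, and the midpoint normalisation $c=(\sup\varphi+\inf\varphi)/2$, which gives $\lVert\varphi-c\rVert_\infty\le\tfrac12\BV(\varphi)$, hence $\lVert\varphi-c\rVert_{\BV}\le\tfrac32\BV(\varphi)$ and the sufficient condition $\BV(\varphi)\le\tfrac23\log\bigl(1+\tfrac1{96}\bigr)\ge0.0069$.
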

The main appeal of this result is that there is no contraction assumption. As before, we get a more general result, valid for $p$-BV potentials, which include in particular $1/p$-H\"older functions. It follows that we get a spectral gap for H\"older potential without any contraction assumption (and on a larger ball). The catch is that the spectral gap is in the $p$-BV norm; if $\varphi, f$ are $1/p$-H\"older with $\mu_\varphi(f)=0$ and $\lVert\varphi\rVert_{\BV_p}$ small enough, then
$\lVert \op{L}_\varphi^n(f)\rVert_{\BV_p}$ decays to zero exponentially fast (in particular the same holds for its uniform norm), but its H\"older norm could be arbitrarily large. However this is a rather small price to pay since many applications of spectral gap (e.g. exponential decay of correlations) do not rely so much on the norm, but more on the nature of the potentials one is allowed to consider.\\

To conclude this article, in Section \ref{sec:gaps} we prove that for transfer operators, having a spectral gap in the sense used in \cite{K:perturbation} (which is a common definition which we also use here) is equivalent to the seemingly stronger condition used in \cite{GKLM} (see Section \ref{sec:eigen}). In particular, under the assumptions of Theorem \ref{theo:mainHol} or \ref{theo:mainBV} we can apply the results of \cite{GKLM}: there is an analytic dependency of pressure and $\mu_\varphi$ on $\varphi$, the RPF measures in the high-temperature regime (in particular, the maximal entropy measure) satisfy a Central Limit Theorem, etc. 

As further applications of spectral gap, let us mention limit theorems:  in the setting of Theorems \ref{theo:mainHol} and \ref{theo:mainBV},
if $X_0$ is a random variable with law $\mu_\varphi$, $X_{k+1}:=T(X_k)$ and $\psi$ is an observable in the corresponding functional space ($\alpha$-H\"older or $p$-BV), then the random process with ``hidden deterministic dependency'' $(\psi(X_k))_{k\ge 0}$ satisfy a Law of Large Numbers, quantified by concentration inequalities, and the Central Limit Theorem, quantified by Berry-Esséen bounds. The proofs can be found for example in \cite{HH}, or in \cite{K:concentration} in a completely effective version (i.e. explicit non-asymptotic constants are obtained).

\section{Banach spaces and Wasserstein metric}

In this section we give the functional analytic set up in which we shall work. The phase space $\Omega$ will always be a metric space, and its metric will be denoted by $d(\cdot,\cdot)$.

\subsection{H\"older norms}

The H\"older classes are among the most usual regularity classes considered for potentials. For any $\alpha\in(0,1]$, one defines the $\alpha$-H\"older constant of a function $f:\Omega\to \mathbb{R}$ as the number
\[ \Hol_\alpha(f) := \sup_{x\neq y\in \Omega}\frac{\lvert f(x)-f(y)\rvert}{d(x,y)^\alpha}.\]
A function is $\alpha$-H\"older if its $\alpha$-H\"older constant is finite, and the set of such function form a Banach space when endowed with the usual H\"older norm. We prefer to assume $\Omega$ to be bounded and use the slightly modified norm
\[\lVert f\rVert_{\Hol_\alpha} := \lVert f\rVert_\infty + (\diam\Omega)^\alpha\Hol_\alpha(f)\]
which thanks to the $(\diam\Omega)^\alpha$ factor is more homogeneous: the two terms have the same physical dimension (they are to be expressed in the same physical unit as $f$), and the norm is invariant if one rescales the distance by a constant. This homogeneity will be slightly more natural and effective in the computations later on.

We denote by $\Hol_\alpha(\Omega)$ the space of $\alpha$-H\"older functions, which is in fact a Banach algebra, i.e. $\lVert fg\rVert_{\Hol_\alpha} \le \lVert f\rVert_{\Hol_\alpha} \lVert g\rVert_{\Hol_\alpha}$.

Indeed, for all $f,g\in \Hol_\alpha(\Omega)$ it holds $\lVert fg\rVert_\infty\le \lVert f\rVert_\infty \lVert g\rVert_\infty$ and
for all $x,y\in \Omega$:
\begin{align*}
\lvert f(x)g(x)-f(y)g(y)\rvert &\le \lvert f(x) g(x)- f(x) g(y)\rvert+\lvert f(x)g(y)-f(y)g(y)\rvert \\
  &\le \lVert f\rVert_\infty \Hol_\alpha(g) d(x,y)^\alpha+\Hol_\alpha(f) d(x,y)^\alpha\lVert g\rVert_\infty \\
\Hol_\alpha(fg) &\le \lVert f\rVert_\infty \Hol_\alpha(g)+\Hol_\alpha(f)\lVert g\rVert_\infty.
\end{align*}

%
%

\subsection{Wasserstein metric}\label{sec:wass}

The $1$-Wasserstein metric is defined on the set $\mathcal{P}(\Omega)$ of Borel probability measures on $\Omega$ by
\[W_1(\mu,\nu) = \inf_{\pi\in\Gamma(\mu,\nu)} \int_{\Omega\times \Omega} d(x,y) \,\dd\pi(x,y)\]
where $\Gamma(\mu,\nu)$ is the set of measures on $\Omega\times \Omega$ whose marginals
are $\mu$ and $\nu$. Elements of $\Gamma(\mu,\nu)$ are called \emph{transport plans} or \emph{couplings} from $\mu$ to $\nu$ For any $\alpha\in (0,1]$, $d(\cdot,\cdot)^\alpha$ is also a metric and the corresponding Wasserstein metric is denoted by $W_\alpha$.

We will only use a few basic properties: $W_\alpha$ is indeed a metric; the infimum in its definition is always attained by some transport plan, then called optimal and generally not unique;
the topology induced by $W_\alpha$ is the weak-$\ast$ topology as soon as $\Omega$ is compact. The Kantorovich duality enables a reformulation of $W_\alpha$ as
\[W_\alpha(\mu,\nu) = \sup_{f} \Big|\int f\,\dd\mu - \int f\,\dd\nu\Big|\]
where the supremum is on all functions $f:\Omega\to\mathbb{R}$ such that $\Hol_\alpha(f) = 1$.


General references on Transport Theory and the Wasserstein distance are \cite{Vi1} and \cite{Gigli:book}

\subsection{Bounded $p$-variation norms}
\label{sec:pBV}

When $\Omega$ is an interval or the circle, we can consider function of bounded variation or generalizations with great efficiency. For the sake of notational simplicity we shall only consider intervals here, but the case of circle is handled very similarly. Higher-dimensional analogues also exist, but are less elementary and have different properties (in particular they are no longer ``insensitive to stretching'').

Assume $\Omega\subset\mathbb{R}$ is an interval, fix $p\in[1,\infty)$ and let $f:\Omega\to\mathbb{R}$. An increasing sequence $x_0<x_1<\dots<x_n$ of elements of $\Omega$ is called a \emph{partition} and denoted by $\underline x$. The $p$-variation of $f$ along the partition $\underline{x}$ is defined by
\[ v_p(f,\underline{x}) := \Big(\sum_{j=1}^n \lvert f(x_j)-f(x_{j-1})\rvert^p \Big)^{\frac1p} \]
and the total $p$-variation of $f$ is defined by
\[ \BV_p (f) := \sup_{\underline{x}} v_p(f,\underline{x})\]
where the supremum is taken over all partitions of $\Omega$.
If this number is finite, $f$ is said to be of \emph{bounded $p$-variation}.

When $p=1$, this is the usual total variation, but $p>1$ is interesting because it allows for much more irregular potentials. Many $\alpha$-H\"older functions ($\alpha<1$) are not of bounded variation, but when $\Omega$ is bounded all $\alpha$-H\"older functions are of bounded $p$-variation for all $p\ge 1/\alpha$. Indeed
for $p\alpha=1$ we have 
\begin{align*}
v_p(f,\underline{x}) &= \Big(\sum_{j=1}^n \lvert f(x_j)-f(x_{j-1})\rvert^p\Big)^{\frac{1}{p}} \\
  &\le \Hol_\alpha(f) \Big(\sum_{j=1}^n \lvert x_j-x_{j-1}\rvert ^{\alpha p}\Big)^{\frac{1}{p}} \\
  &\le \Hol_\alpha(f) \Big(\sum_{j=1}^n x_j-x_{j-1}\Big)^{\frac{1}{p}} \\
  &\le \Hol_\alpha(f) (\diam\Omega)^\alpha
\end{align*}
for all partition $\underline{x}$. For larger $p$, simply use that when $\Omega$ is bounded, $\alpha$-H\"older functions are also $\beta$-H\"older for all $\beta<\alpha$. 

We endow the space $\BV_p(\Omega)$ of functions of bounded $p$-variation with the norm
\[\lVert f\rVert_{\BV_p} := \lVert f\rVert_\infty + \BV_p(f).\]
It is more usual to replace in the above definition the supremum norm by an integral norm, but:
\begin{itemize}
\item we don't want to give such a special role to Lebesgue measure in our context,
\item the two definitions yield equivalent norms,
\item the above choice norm makes $\BV_p(\Omega)$ a Banach algebra (the proof is pretty much the same as for the H\"older norm). 
\end{itemize}

Two important interlaced differences between the bounded variation and H\"older classes of regularity are that:
\begin{itemize}
\item  H\"older is \emph{intensive} while BV is \emph{extensive}, i.e. if one partitions $\Omega$ into two intervals $I_1,I_2$, then sum of the total variation of $f$ on $I_1$ and $I_2$ is not greater than the total variation of $f$ over $\Omega$ (it can be lesser if $f$ has a jump precisely at the interface between the two intervals),
\item the H\"older constant is diminished when one stretches the space, i.e. if a map $Y$ is $\theta$-contracting, then $\Hol_\alpha(f\circ Y)\le \theta^\alpha \Hol_\alpha(f)$, while $\BV$ is insensitive to stretching.
\end{itemize}
This is to be kept in mind to understand the difference in the assumptions of our main results in these two cases.

\subsection{Positive eigendata and spectral gap}
\label{sec:eigen}

Let us now precise the notion of spectral gap, which appears under slightly different wordings in the literature. We consider a bounded linear operator $\op{L}$ acting on a Banach space $\fX$. The operator norm is denoted, as the norm on $\fX$, by $\lVert\cdot\rVert$.

The first two definitions of a spectral gap will make sense for a general Banach space $\fX$, but the third one will need $\fX$ to be a space of function, which we therefore assume from now on. This immediately provides us with an additional structure: the cone of positive functions.
\begin{defi}\label{defi:positive}
We say that an operator $\op{L}$ \emph{has positive eigendata} if it has a positive eigenvalue $\lambda$ and a positive,
bounded away from $0$ eigenfunction $h\in\fX$ for this eigenvalue.
\end{defi}

Let us phrase three notions of spectral gap; the first two are classical and easily seen to be equivalent, and the third one is used in \cite{GKLM} and, under some general hypotheses, will be proved to be equivalent to the other two in Section \ref{sec:gaps}. We assume $\op{L}$ has an eigenvalue $\lambda>0$ with eigenvector $u_0$, and spectral gap is to be understood with respect to $\lambda$. 

\begin{gap}
There is a complement $G$ to $\langle u_0\rangle$ in $\fspace{X}$ (i.e. $G$ is a closed subspace and $\langle u_0\rangle\oplus G= \fspace{X}$) which is stable under the action of $\op{L}$, and there exist numbers $C>0$, $\delta\in(0,1)$ such that
\[\lVert \op{L}_{|G}^n\rVert \le C\lambda^n(1-\delta)^n.\]
\label{gap:weak}
\end{gap}

\begin{gap}
There exist two continuous linear operators $\op{P},\op{R}:\fspace{X}\to\fspace{X}$ such that $\op{L}=\lambda \op{P}+\op{R}$, $\op{P}$ is a rank one projection ($\dim\im \op{P}=1$ and $\op{P}^2=\op{P}$), $\op{P}\op{R}=\op{R}\op{P}=0$,  and there exists numbers $C>0$ and $\delta\in(0,1)$ such that:
\[\lVert \op{R}^n \rVert \le C\lambda^n(1-\delta)^n.\]
\label{gap:proj}
\end{gap}

The spectral gap assumption used in \cite{GKLM} seems slightly more precise:
\begin{gap}
The dual operator $\op{L}^*$ has an eigenmeasure
$\nu\in \spacem{P}(\Omega)$ for the eigenvalue $\lambda$, in particular
  \[\int \op{L}(f) \,\dd\nu = \lambda \int f \,\dd\nu \qquad
   \forall f\in\fX\]
and there exist positive constants
  $C>0$, $\delta\in(0,1)$ such that
  for all $n\in\mathbb{N}$ and all $f\in\fspace{X}$
   such that $\int f \,\dd\nu = 0$, we have
  \[ \lVert \op{L}^n(f) \rVert \le
    C \lambda^n(1-\delta)^n \lVert f\rVert.\]
\label{gap:GKLM}
\end{gap}

In particular, \ref{gap:GKLM} implies some identification between measures and linear forms on $\fX$, and identifies the stable complement $G$.

In all three definitions, $C$ is called the \emph{constant} and $\delta$ the \emph{size} of the spectral gap. In the main Theorems below, we shall use the first definition; this only matters when we are interested in the value of the constant, which may vary from a definition to another, but the mere statement that there is a spectral gap will hold for all definitions, 
thanks to Theorem \ref{theo:SG}.

\subsection{Quantitative perturbation Theory}
\label{sec:perturbation}

Let us now state the main perturbative tool we will use. We consider $\op{L}_0:\fX\to\fX$ a bounded linear operator of a Banach space $\fX$, with an eigenvalue $\lambda_0$, eigenvector $u_0$ and eigenform $\phi_0$ (i.e. $\phi_0$ is an eigenvector of the dual operator $\op{L}_0^*$ for the same eigenvalue). We assume a spectral gap, and consider the \emph{condition number} $\tau_0 := \frac{\lVert \phi_0\rVert \lVert u_0\rVert}{\lvert \phi_0(u_0)\rvert}$. We denote by $\pi_0$ the projection on $\ker\phi_0$ along the direction $\langle u_0\rangle$.

\begin{theo}[Corollary 2.12 of \cite{K:perturbation}]
\label{theo:perturbation}
If $\lambda_0=\lVert \op{L}_0\rVert=1$ and $\op{L}_0$ has spectral gap of size $\delta_0$ and constant $1$ (according to Definition \ref{gap:weak}), then all $\op{L}$ such that
\[\lVert\op{L}-\op{L}_0\rVert \le \frac{\delta_0(\delta_0-\delta)}{6(1+\delta_0-\delta)\tau_0\lVert\pi_0\rVert}\]
have a spectral gap of size $\delta$ below $\lambda_\op{L}$, with constant $1$. In particular, all $\op{L}$ such that
\[\lVert\op{L}-\op{L}_0\rVert < \frac{\delta_0^2}{6(1+\delta_0)\tau_0\lVert\pi_0\rVert}\]
have some spectral gap, with constant $1$.
\end{theo}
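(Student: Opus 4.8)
The plan is to reduce everything to an elementary block-matrix computation with respect to the $\op{L}_0$-invariant splitting $\fX=\langle u_0\rangle\oplus G$, where $G=\ker\phi_0$. The first and most useful observation is that, for $\op{L}_0$, a spectral gap of size $\delta_0$ with constant $1$ (Definition \ref{gap:weak}) is \emph{equivalent} to the plain operator-norm bound $\lVert\op{L}_0|_G\rVert\le 1-\delta_0$: taking $n=1$ in $\lVert(\op{L}_0|_G)^n\rVert\le(1-\delta_0)^n$ forces it, and submultiplicativity gives the converse. Thus the hypothesis is simply $\op{L}_0 u_0=u_0$, $\lVert\op{L}_0\rVert=1$ and $\lVert\op{L}_0|_G\rVert\le1-\delta_0$, and the whole theorem amounts to producing, for $\op{L}=\op{L}_0+\op{E}$ with $\op{E}=\op{L}-\op{L}_0$, an $\op{L}$-invariant line with eigenvalue $\lambda$ close to $1$ and an $\op{L}$-invariant complement $G_\op{L}$ on which the \emph{induced} operator norm of $\op{L}$ is at most $\lambda(1-\delta)$.

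Writing $\Pi_0=\frac{\phi_0(\cdot)}{\phi_0(u_0)}u_0$ for the rank-one projection onto $\langle u_0\rangle$ and $\pi_0=\Id-\Pi_0$, I record that $\lVert\Pi_0\rVert=\tau_0$ and decompose $\op{L}$ into blocks $a=\Pi_0\op{L}\Pi_0$, $\beta=\Pi_0\op{L}\pi_0$, $\gamma=\pi_0\op{L}\Pi_0$, $\op{D}=\pi_0\op{L}\pi_0$. Since the corresponding blocks of $\op{L}_0$ are $1,0,0,\op{L}_0|_G$, the corrections obey $|a-1|\le\tau_0\lVert\op{E}\rVert$, $\lVert\beta\rVert\le\tau_0\lVert\op{E}\rVert$, $\lVert\gamma\rVert\le\lVert\pi_0\rVert\lVert\op{E}\rVert$ and $\lVert\op{D}\rVert\le(1-\delta_0)+\lVert\pi_0\rVert\lVert\op{E}\rVert$. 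I would then look for $G_\op{L}$ as the graph $\{v+Sv:v\in G\}$ of a linear map $S\colon G\to\langle u_0\rangle$; invariance is the Riccati equation $aS+\beta=S\gamma S+S\op{D}$, which I solve by a contraction-mapping argument. Because $|a|$ is near $1$ while $\lVert\op{D}\rVert<1$, the linear part $S\mapsto aS-S\op{D}$ is invertible with inverse norm $\le(|a|-\lVert\op{D}\rVert)^{-1}\approx\delta_0^{-1}$, giving a unique small solution with $\lVert S\rVert\lesssim\tau_0\lVert\op{E}\rVert/\delta_0$; the symmetric computation on the $\langle u_0\rangle$-side produces the invariant line and the eigenvalue $\lambda=1+O(\tau_0\lVert\op{E}\rVert)$.

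The main obstacle is the last step: obtaining the decay on $G_\op{L}$ with constant \emph{exactly} $1$, rather than merely bounding a spectral radius or paying a conjugacy factor $\lVert J\rVert\lVert J^{-1}\rVert>1$. The point is to bound the induced norm directly: for $w=v+Sv\in G_\op{L}$ one has $\op{L}w=v'+Sv'$ with $v'=\op{D}v+\gamma Sv$, whence
\[\lVert\op{L}|_{G_\op{L}}\rVert\le\frac{1+\lVert S\rVert}{1-\lVert S\rVert}\,\lVert\op{D}+\gamma S\rVert\le\frac{1+\lVert S\rVert}{1-\lVert S\rVert}\big((1-\delta_0)+\lVert\pi_0\rVert\lVert\op{E}\rVert+\lVert\gamma\rVert\lVert S\rVert\big).\]
Once this is an honest operator-norm bound one gets $\lVert(\op{L}|_{G_\op{L}})^n\rVert\le\lVert\op{L}|_{G_\op{L}}\rVert^n$ for free, i.e. constant $1$ in \ref{gap:weak}. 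The delicate work is purely the bookkeeping that forces the right-hand side below $\lambda(1-\delta)$: substituting $\lVert S\rVert\lesssim\tau_0\lVert\op{E}\rVert/\delta_0$ and the block bounds, the distortion factor $\frac{1+\lVert S\rVert}{1-\lVert S\rVert}$, the coupling terms in $\tau_0$ and $\lVert\pi_0\rVert$, and the $\lambda$-versus-$1$ discrepancy must all be absorbed into $\delta_0-\delta$; this is exactly what the explicit threshold $\lVert\op{E}\rVert\le\frac{\delta_0(\delta_0-\delta)}{6(1+\delta_0-\delta)\tau_0\lVert\pi_0\rVert}$ encodes, the factor $6(1+\delta_0-\delta)$ being the price of these estimates. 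Finally, the ``in particular'' statement is just the limit $\delta\to0^+$ of the first bound, since a strict inequality there leaves room for some small $\delta>0$ to satisfy the hypothesis, yielding a gap of positive (if unspecified) size.
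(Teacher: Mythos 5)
This statement is not proved in the paper at all: it is imported verbatim as Corollary 2.12 of \cite{K:perturbation}, so there is no in-paper argument to compare against, and your attempt must stand on its own as a proof of an \emph{effective} perturbation theorem. Your structural reductions are all sound and match the standard strategy for such results: with constant $1$, a gap of size $\delta_0$ is indeed equivalent to the one-step bound $\lVert \op{L}_0|_G\rVert\le 1-\delta_0$; the identification $G=\ker\phi_0$, the equality $\lVert\Pi_0\rVert=\tau_0$, and the block bounds $|a-1|\le\tau_0\lVert\op{E}\rVert$, $\lVert\beta\rVert\le\tau_0\lVert\op{E}\rVert$, $\lVert\gamma\rVert\le\lVert\pi_0\rVert\lVert\op{E}\rVert$, $\lVert\op{D}\rVert\le(1-\delta_0)+\lVert\pi_0\rVert\lVert\op{E}\rVert$ are all correct; the graph/Riccati construction of the invariant complement, the direct induced-norm estimate $\lVert\op{L}|_{G_{\op{L}}}\rVert\le\frac{1+\lVert S\rVert}{1-\lVert S\rVert}\lVert\op{D}+\gamma S\rVert$ (which is the right way to keep the constant equal to $1$), and the derivation of the ``in particular'' clause by letting $\delta\to 0^+$ are likewise correct.

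The genuine gap is that the quantitative content of the theorem --- which is its \emph{entire} content, since the qualitative stability of a spectral gap is classical --- is asserted rather than proved. Every bound that matters is left as ``$\lesssim$'' with unspecified implicit constants ($\lVert S\rVert\lesssim\tau_0\lVert\op{E}\rVert/\delta_0$, $\lambda=1+O(\tau_0\lVert\op{E}\rVert)$), and the final paragraph is circular: saying that the threshold $\frac{\delta_0(\delta_0-\delta)}{6(1+\delta_0-\delta)\tau_0\lVert\pi_0\rVert}$ ``is exactly what the estimates encode'' assumes the conclusion instead of deriving it. To close the argument you must (i) run the contraction mapping with an explicit radius, e.g.\ show $a-\lVert\op{D}\rVert\ge\delta_0-(\tau_0+\lVert\pi_0\rVert)\lVert\op{E}\rVert$ and extract an explicit bound $\lVert S\rVert\le r(\lVert\op{E}\rVert,\delta_0,\tau_0,\lVert\pi_0\rVert)$ (and similarly for the graph map giving the eigenline); (ii) deduce an explicit lower bound on $\lambda$, e.g.\ $\lambda\ge a-\lVert\beta\rVert\lVert R\rVert$; and (iii) verify the inequality $\frac{1+\lVert S\rVert}{1-\lVert S\rVert}\bigl(\lVert\op{D}\rVert+\lVert\gamma\rVert\lVert S\rVert\bigr)\le\lambda(1-\delta)$ under precisely the stated hypothesis, tracking where the factor $6(1+\delta_0-\delta)$ and the product $\tau_0\lVert\pi_0\rVert$ come from. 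A rough accounting (using $\tau_0,\lVert\pi_0\rVert\ge 1$, hence $\tau_0+\lVert\pi_0\rVert\le 2\tau_0\lVert\pi_0\rVert$) suggests your scheme can indeed absorb all error terms within the stated threshold, so the approach is viable; but as written the proof stops exactly where the theorem begins.
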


Results of this flavor are quite old, see e.g.\cite{Baumgartel}, \cite{DS} and   \cite{Kato}. However it is hard to find explicit such explicit radius bounds written for spectral gaps, and not only for having a simple isolated eigenvalue.

To apply this result, we will need to estimate the quantities $\delta_0$, $\tau_0$ and $\lVert\pi_0\rVert$.
We will first use the following lemma of Doeblin-Fortet/Lasota-Yorke type, which also appears in \cite{K:concentration} and that we reproduce with its proof for the sake of completeness.

Consider a normed space $\fX(\Omega)$ of (Borel measurable, bounded) functions $\Omega\to\mathbb{R}$, with norm $\lVert \cdot\rVert = \lVert \cdot \rVert_\infty+V(\cdot)$ where $V$ is a semi-norm.
\begin{lemm}\label{lemm:gap}
Assume that for some constant $D>0$, for all probability $\mu$ on $\Omega$ and for all $f\in\fX$ such that $\mu(f)=0$, 
$\lVert f\rVert_\infty \le D V(f)$.

Let $\op{L}_0\in\fspace{B}(\fX(\Omega))$ and assume that for some $\theta\in(0,1)$ and all $f\in\fX$: 
\[\rVert \op{L}_0f\rVert_\infty \le \lVert f\rVert_\infty \quad\mbox{and}\quad V(\op{L}_0 f)\le \theta V(f)\]
and having eigenvalue $1$ with an eigenprobability $\mu_0$, i.e. $\op{L}_0^*\mu_0=\mu_0$.

Then $\op{L}_0$ has a spectral gap (for the eigenvalue $1$, the contraction being on the stable space $\ker\mu_0$) with constant $1$, of size 
\[\delta_0 = \frac{1-\theta}{1+D\theta}\]
\end{lemm}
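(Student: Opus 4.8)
The plan is to realize the spectral gap of Definition \ref{gap:weak} directly, taking as eigenvector the constant function $\one$ and as stable complement $G=\ker\mu_0=\{f\in\fX:\mu_0(f)=0\}$. First I would record that $\one$ is fixed by $\op{L}_0$: since $V$ vanishes on constants one has $V(\op{L}_0\one)\le\theta V(\one)=0$, while $\mu_0(\op{L}_0\one)=\mu_0(\one)=1$, so applying the standing inequality to $\op{L}_0\one-\one$ (which has zero $\mu_0$-mean and zero $V$-seminorm) forces $\lVert\op{L}_0\one-\one\rVert_\infty\le D\cdot 0=0$, i.e. $\op{L}_0\one=\one$. Since $\mu_0(\one)=1\neq0$, every $f$ splits uniquely as $f=\mu_0(f)\,\one+(f-\mu_0(f)\one)$, giving $\langle\one\rangle\oplus G=\fX$ with $G$ closed (as $\mu_0$ is continuous). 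That $G$ is $\op{L}_0$-stable is immediate from $\op{L}_0^*\mu_0=\mu_0$: if $\mu_0(f)=0$ then $\mu_0(\op{L}_0 f)=\mu_0(f)=0$.

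Because the target constant is $C=1$, it suffices to bound the one-step operator norm of the restriction and iterate by submultiplicativity: I would prove $\lVert\op{L}_0 g\rVert\le(1-\delta_0)\lVert g\rVert$ for every $g\in G$, which yields $\lVert\op{L}_{0|G}^n\rVert\le(1-\delta_0)^n$ as required. Fix $g\in G$ and write $a=\lVert g\rVert_\infty$, $b=V(g)$, so $\lVert g\rVert=a+b$ and, since $\mu_0(g)=0$, the hypothesis gives $a\le Db$. Now $\op{L}_0 g$ again lies in $G$, so the same hypothesis applies to it; combining it with the two contraction assumptions produces the two competing sup-norm bounds $\lVert\op{L}_0 g\rVert_\infty\le\lVert g\rVert_\infty=a$ and $\lVert\op{L}_0 g\rVert_\infty\le D\,V(\op{L}_0 g)\le D\theta b$, together with $V(\op{L}_0 g)\le\theta b$. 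Hence
\[\lVert\op{L}_0 g\rVert\le\min(a,D\theta b)+\theta b.\]

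The remaining task is the elementary inequality $\min(a,D\theta b)+\theta b\le(1-\delta_0)(a+b)$, where one checks $1-\delta_0=\tfrac{(1+D)\theta}{1+D\theta}$, and I would settle it by distinguishing two regimes. If $a\le D\theta b$ the left side is $a+\theta b$, and via $a\le D\theta b$ the claim reduces to $\delta_0(1+D\theta)\le 1-\theta$, which is exactly the definition of $\delta_0$. If $a>D\theta b$ the left side is $(1+D)\theta b$, and one uses $a+b>(1+D\theta)b$ to conclude, with equality never attained. The main point to get right — and the only place the precise value of $\delta_0$ enters — is that one must keep the \emph{sharper} bound $\min(a,D\theta b)$ on $\lVert\op{L}_0 g\rVert_\infty$ rather than merely $\lVert\op{L}_0 g\rVert_\infty\le a$: the latter would only yield the weaker size $(1-\theta)/(1+D)$, whereas retaining the $D\theta b$ alternative precisely in the regime where $g$ is already small in sup norm is what produces the stated $\delta_0=(1-\theta)/(1+D\theta)$.
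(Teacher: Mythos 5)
Your core estimate is correct and is essentially the paper's own proof. The paper sets $t=\lVert f\rVert_\infty/\lVert f\rVert$ and computes $\max_{t\in[0,1]}\min\bigl(t+\theta(1-t),\,(D+1)\theta(1-t)\bigr)=(D+1)\theta/(1+D\theta)$, which is exactly your two-case verification of $\min(a,D\theta b)+\theta b\le(1-\delta_0)(a+b)$ in the variables $a=\lVert g\rVert_\infty$, $b=V(g)$; in both arguments the decisive point is the one you emphasize, namely retaining the bound $\lVert\op{L}_0 g\rVert_\infty\le D\theta V(g)$ coming from $\op{L}_0 g\in\ker\mu_0$, not merely $\lVert\op{L}_0 g\rVert_\infty\le\lVert g\rVert_\infty$.

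The one overstep is your opening paragraph: you invoke ``$V$ vanishes on constants'' to conclude $\op{L}_0\one=\one$, but neither $V(\one)=0$ nor even $\one\in\fX$ is among the hypotheses of this lemma --- here $V$ is only assumed to be a semi-norm. The paper is deliberately careful on this point: invariance of $V$ under addition of constants is introduced as an explicit \emph{extra} hypothesis only in Lemma \ref{lemm:normofpi}, and the existence of an eigenfunction is not established inside Lemma \ref{lemm:gap} at all; it is supplied by the remark immediately following it (the operators actually used have the form $\op{L}_0 f(x)=\frac1k\sum_{j=1}^k f(b_j(x))$, hence fix $\one$), while the lemma's conclusion is to be read, per its parenthetical, as the contraction statement on the stable space $\ker\mu_0$ --- which your argument does prove, independently of the step about $\one$. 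So your proof is correct under the additional (and, in both of the paper's applications, true) assumptions $\one\in\fX$ and $V(\one)=0$; if you want to keep the eigenvector step, you must state these explicitly, since they cannot be derived from the lemma's hypotheses alone. Granting them, your derivation of $\op{L}_0\one=\one$ --- zero $V$-seminorm together with zero $\mu_0$-mean forces $\lVert\op{L}_0\one-\one\rVert_\infty=0$ --- is a nice touch that the paper leaves implicit.
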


Of course, the hypotheses of the Lemma also ensure that $\lVert\op{L}_0\rVert \le 1$; and since we will only consider operators of the form
\[\op{L}_0f(x) = \frac1k \sum_{j=1}^k f(b_j(x))\]
we will always have $\op{L}_0\one=\one$ and thus $\lambda_0=\lVert\op{L}_0\rVert=1$.

\begin{proof}
Let $f\in \ker\mu_0$; then $\lVert \op{L}_0f\rVert_\infty\le \lVert f\rVert_\infty$ and $\op{L}_0f\in\ker\mu_0$, so that $\lVert \op{L}_0 f\rVert_\infty\le DV(\op{L}_0f)\le D\theta V(f)$.

Denote by $t\in[0,1]$ the number such that $\lVert f\rVert_\infty = t\lVert f\rVert$ (and therefore $V(f)=(1-t)\lVert f\rVert$).
The above two controls on $\lVert \op{L}_0(f)\rVert_\infty$ can then be written as $\lVert \op{L}_0(f)\rVert_\infty \le \min\big(t, D\theta(1-t)\big)\lVert f\rVert$ and using $V(\op{L}_0f)\le \theta V(f)$ again we get
\begin{align*}
\lVert \op{L}_0(f)\rVert &\le \min\big(t+\theta(1-t), (D+1)\theta(1-t)\big)\lVert f\rVert \\
\lVert (\op{L}_0)_{|\ker\mu_0}\rVert &\le \max_{t\in[0,1]} \min\big(t+\theta(1-t), (D+1)\theta(1-t)\big).
\end{align*}
The maximum is reached when $t+\theta(1-t) = (D+1)\theta(1-t)$, i.e. when $t=D\theta/(1+D\theta)$, at which point the value in the minimum is 
$(D+1)\theta/(D\theta+1) \in(0,1)$. Therefore there is a spectral gap with constant $1$ and size $1- (D+1)\theta/(D\theta+1)$, as claimed.
\end{proof}

\begin{lemm}\label{lemm:normofpi}
Assume again that for some constant $D>0$, for all probability $\mu$ on $\Omega$ and for all $f\in\fX$ such that $\mu(f)=0$, 
$\lVert f\rVert_\infty \le D V(f)$.

If the semi-norm $V$ is invariant under translation by a constant (i.e. $V(f+c)=V(f)$ for all $f\in\fX(\Omega)$ and all constant $c$), 
then all operators $\pi\in\fspace{B}(\fX(\Omega))$ of the form $\pi f=f-\mu(f)$ satisfy
\[\lVert\pi\rVert \le \frac{2D+2}{D+2}.\]
\end{lemm}

This lemma will apply below, as both $\Hol_\alpha$ and $\BV_p$ are invariant by translation by a constant.

\begin{proof}
The proof proceeds as the previous one. We consider $f\in\fX(\Omega)$ and let $t\in [0,1]$ be such that $\lVert f\rVert_\infty = t\lVert f\rVert$ and $V(f)=(1-t)\lVert f\rVert$. On the one hand we have $V(\pi f)=V(f)=(1-t)\lVert f\rVert$, and on the other hand we have both
\[\lVert \pi f\rVert_\infty \le 2\lVert f\rVert_\infty \qquad\mbox{and}\qquad \lVert \pi f\rVert_\infty \le DV(f),\]
the first inequality being a simple triangle inequality, while the second follows from an hypothesis since $\mu(\pi f)=0$.
Then we get $\lVert \pi f\rVert \le \min(1+t, (D+1)(1-t))\lVert f\rVert$ and therefore
\[\lVert \pi\rVert \le \max_{t\in[0,1]} \min (1+t, (D+1)(1-t)).\]
The maximum is reached at $t=D/(D+2)$, from which the result follows.
\end{proof}

\section{H\"older potentials and maps that are backward contracting on average}

Let $(\Omega,d)$ be a metric space which we assume to be of finite diameter.
\begin{defi}\label{defi:classH}
We say that a map $T:\Omega\to\Omega$ is of \emph{class $H(\alpha,\theta)$} where $\alpha\in(0,1]$ and $\theta\in(0,1)$ if there exist an integer $k\ge 2$ and ``inverse branches'' $(b_j)_{1\le j\le k}$ such that:
\begin{enumerate}
\item each $b_j$ is a (Borel) measurable map $\Omega\to\Omega$,
\item for all $x$ except possibly countably many, $T^{-1}(x) = \{b_1(x),b_2(x),\dots,b_k(x)\}$,
\item $T$ is \emph{backward $\theta$-contracting on average} with respect to the metric $d^\alpha$:
\[\forall y,z\in \Omega: \exists\sigma\in \mathfrak{S}_k,\quad \frac{1}{k} \sum_{j=1}^k d(b_j(y),b_{\sigma(j)}(z))^\alpha \le\theta d(y,z)^\alpha.\]
\end{enumerate}  
If $T$ is of class $H(\alpha,\theta)$, for each $\alpha$-H\"older potential we define a transfer operator by
\[\op{L}_{\varphi} f(x) = \frac1k \sum_{j=1}^k e^{\varphi(b_j(x))} f(b_j(x))\]
i.e. we consider the backward random walk $\mchain{M}=(\frac1k \sum_j \delta_{b_j(x)})_x$.
\end{defi}
The permutation $\sigma$ is meant to account for the fact that one may not be able to define inverse branches globally in a continuous fashion (hence the choice to define them globally but only as measurable maps). The second condition is not strictly necessary for our main Theorem to hold, but it ensures that as soon as the RPF measures are atomless, they are indeed $T$-invariant (see Remark \ref{rema:RPF}).

\begin{exem}\label{exem:subtle}
Let us consider three variations on the doubling map that only differ in the details that led us to the definition above:
\begin{align*}
D_c : \mathbb{R}/\mathbb{Z} &\to \mathbb{R}/\mathbb{Z} \\
x &\mapsto 2x \mod 1 \\
D_t : [0,1] &\to [0,1]
  & D_i : [0,1] &\to [0,1] \\
x &\mapsto \begin{cases} 2x &\mbox{if }x\in[0,\frac12] \\  
                            2-2x &\mbox{if } x\in [\frac12,1] 
                  \end{cases}
    & x &\mapsto \begin{cases} 2x &\mbox{if } x\in [0,\frac12) \\
                             2x-1 &\mbox{if } x\in [\frac12,1] 
               \end{cases}
\end{align*}
For each of them, let us define inverse branches showing that they these  three maps are of class $H(1,\frac12)$ (and thus $H(\alpha,\frac1{2^\alpha})$ for all $\alpha\in (0,1]$). For $x\in \mathbb{R}/\mathbb{Z}$, denote by $\{x\}$ its representative in $[0,1)$. Then we take:
\begin{itemize}
\item for $D_c$: $b_1 = x\mapsto \{x\}/2 \mod 1$ and $b_2 = x\mapsto \{x\}/2+1/2 \mod 1$. Both branches have one discontinuity point at $0$, while $D_c^{-1}(x)=\{b_1(x),b_2(x)\}$ for all $x$,
\item for $D_t$: $b_1 = x\mapsto x/2$ and $b_2 = x\mapsto 1-x/2$. Then both branches are continuous and $D_t^{-1}(x)=\{b_1(x),b_2(x)\}$ for all $x$, but $D_t$ is not $2$-to-one since $b_1(1)=b_2(1)$,
\item for $D_i$:  $b_1 = x\mapsto x/2$ and $b_2 = x\mapsto x/2+1/2$. Then both branches are continuous but $D_i^{-1}=\{b_1(x),b_2(x)\}$ only holds for $x\neq 1$, since $D_i(b_1(1))=D_i(1/2)=0$.
\end{itemize}
\end{exem}

Our first main result is the following.
\begin{theo}\label{theo:Hol}
Let $T$ be a map of class $H(\alpha,\theta)$ acting on a space $\Omega$ of finite diameter. For all $\alpha$-H\"older potentials $\varphi$ such that
\[\Hol_\alpha(\varphi) \le \frac{2}{3(\diam\Omega)^\alpha}\log \Big(1+ \frac{(1-\theta)^2}{16(1+\theta)}\Big)\]
the transfer operator $\op{L}_\varphi$ has a spectral gap (with constant $1$).
\end{theo}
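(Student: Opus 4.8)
The plan is to apply the quantitative perturbation Theorem \ref{theo:perturbation} with unperturbed operator the transfer operator of the zero potential, namely the averaging operator $\op{L}_0 f(x) = \frac1k\sum_{j=1}^k f(b_j(x))$, and perturbed operator $\op{L}_\varphi$. Throughout I would write $V(f) = (\diam\Omega)^\alpha\Hol_\alpha(f)$ for the seminorm part of $\lVert\cdot\rVert_{\Hol_\alpha}$. First I would establish the Lasota--Yorke estimates for $\op{L}_0$: the bound $\lVert\op{L}_0 f\rVert_\infty \le \lVert f\rVert_\infty$ is immediate since $\op{L}_0$ is an average, and for the seminorm I would, given $x,y$, take the permutation $\sigma$ furnished by the class $H(\alpha,\theta)$ hypothesis, use $\sum_j f(b_{\sigma(j)}(y)) = \sum_j f(b_j(y))$, and estimate
\[\lvert\op{L}_0 f(x) - \op{L}_0 f(y)\rvert \le \tfrac1k\sum_j \lvert f(b_j(x)) - f(b_{\sigma(j)}(y))\rvert \le \Hol_\alpha(f)\,\tfrac1k\sum_j d(b_j(x),b_{\sigma(j)}(y))^\alpha \le \theta\,\Hol_\alpha(f)\,d(x,y)^\alpha,\]
whence $V(\op{L}_0 f)\le\theta V(f)$.

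Next I would identify the constant $D$ of Lemmas \ref{lemm:gap} and \ref{lemm:normofpi}: if $\mu(f)=0$ then $\inf f\le 0\le\sup f$, so $\lVert f\rVert_\infty \le \sup f-\inf f \le \Hol_\alpha(f)(\diam\Omega)^\alpha = V(f)$, giving $D=1$. Once the existence of an eigenprobability $\mu_0$ with $\op{L}_0^*\mu_0=\mu_0$ is secured, Lemma \ref{lemm:gap} yields a spectral gap for $\op{L}_0$ of constant $1$ and size $\delta_0 = \frac{1-\theta}{1+\theta}$. Taking $u_0=\one$ and $\phi_0=\mu_0$, I would compute $\lVert u_0\rVert = 1$, $\lVert\mu_0\rVert = 1$ and $\phi_0(u_0)=\mu_0(\one)=1$, so that the condition number is $\tau_0 = 1$, and invoke Lemma \ref{lemm:normofpi} (with $D=1$) to get $\lVert\pi_0\rVert\le\frac43$.

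Feeding $\delta_0 = \frac{1-\theta}{1+\theta}$, $\tau_0=1$ and $\lVert\pi_0\rVert\le\frac43$ into the second conclusion of Theorem \ref{theo:perturbation} gives the explicit admissible radius
\[\frac{\delta_0^2}{6(1+\delta_0)\tau_0\lVert\pi_0\rVert} = \frac{(1-\theta)^2}{16(1+\theta)} =: R,\]
so it remains to bound $\lVert\op{L}_\varphi - \op{L}_0\rVert$ by $R$. The crucial observation is the factorization $\op{L}_\varphi = \op{L}_0\circ M_{e^\varphi}$, with $M_g$ denoting multiplication by $g$; hence $\op{L}_\varphi - \op{L}_0 = \op{L}_0\circ M_{e^\varphi - 1}$, and using $\lVert\op{L}_0\rVert\le 1$ together with the Banach algebra property $\lVert M_g\rVert\le\lVert g\rVert_{\Hol_\alpha}$ we get $\lVert\op{L}_\varphi - \op{L}_0\rVert \le \lVert e^\varphi - 1\rVert_{\Hol_\alpha}$. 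Since the spectral gap is unchanged under $\varphi\mapsto\varphi+c$ (this only multiplies $\op{L}_\varphi$ by $e^c$), I would first subtract the midrange constant $c = \frac{\sup\varphi+\inf\varphi}{2}$ so as to assume $\lVert\varphi\rVert_\infty \le \frac12 V(\varphi)$. Expanding the exponential as a power series in the algebra gives $\lVert e^\varphi - 1\rVert_{\Hol_\alpha}\le e^{\lVert\varphi\rVert_{\Hol_\alpha}} - 1$, and $\lVert\varphi\rVert_{\Hol_\alpha} = \lVert\varphi\rVert_\infty + V(\varphi) \le \frac32 V(\varphi)$, so that $\lVert\op{L}_\varphi - \op{L}_0\rVert \le e^{\frac32(\diam\Omega)^\alpha\Hol_\alpha(\varphi)} - 1$. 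The hypothesis $\Hol_\alpha(\varphi) \le \frac{2}{3(\diam\Omega)^\alpha}\log(1+R)$ is exactly what makes this at most $R$.

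The main obstacle I anticipate is not any single inequality but the bookkeeping around the constant part of $\varphi$: without the renormalization $\varphi\mapsto\varphi+c$, the norm of $M_{e^\varphi-1}$ is governed by $\lVert\varphi\rVert_\infty$, which is unbounded across the family, so it is precisely the invariance of spectral gap under adding a constant that lets the final bound depend on $\Hol_\alpha(\varphi)$ alone. A secondary point requiring care is the existence of the eigenprobability $\mu_0$ of $\op{L}_0^*$, which I would obtain from the fact that $\op{L}_0^*$ preserves the set of probability measures (as $\op{L}_0$ is positive with $\op{L}_0\one=\one$) together with a weak-$\ast$ compactness and fixed-point argument.
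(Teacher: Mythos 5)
Your proposal is correct and follows essentially the same route as the paper's proof: the same Lasota--Yorke estimates via the permutation $\sigma$, the same constants $D=1$, $\delta_0=\frac{1-\theta}{1+\theta}$, $\tau_0=1$, $\lVert\pi_0\rVert\le\frac43$ fed into Theorem \ref{theo:perturbation}, the same factorization $\op{L}_\varphi=\op{L}_0(e^\varphi\cdot)$ with the Banach algebra bound, and the same midrange renormalization $c=\frac{\sup\varphi+\inf\varphi}{2}$. The only (inessential) divergences are that you obtain the eigenprobability $\mu_0$ by weak-$\ast$ compactness and a fixed-point argument where the paper uses the $\theta$-contraction of $\op{L}_0^*$ in the Wasserstein metric $W_\alpha$ together with completeness, and that your justification of $D=1$ via $\inf f\le 0\le\sup f$ is slightly more robust than the paper's appeal to $f$ vanishing at a point (which implicitly uses connectedness of $\Omega$).
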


\begin{proof}
We consider the transfer operator $\op{L}_0$ associated to the null potential, i.e.
\[\op{L}_0 f(x) = \frac{1}{k}\sum_{j=1}^k f(b_j (x)) \]
which under the above assumption acts continuously on $\Hol_\alpha(\Omega)$: indeed $\lVert \op{L}_0 f\rVert_\infty \le \lVert f\rVert_\infty$ and for all $x,y\in\Omega$ and some permutation $\sigma$ depending on $x$ and $y$ we have:
\begin{align*}
\big\lvert \op{L}_0f(x) - \op{L}_0f(y)\big\rvert &\le \frac{1}{k} \sum_{j=1}^k \big\lvert f(b_j(x))-f(b_{\sigma(j)}(y)) \big\rvert \\
  &\le \Hol_\alpha(f) \cdot \frac1k \sum_{j=1}^k d(b_j(x),b_{\sigma(j)}(y))^\alpha \\
  &\le \theta  \Hol_\alpha(f) d(x,y)^\alpha
\end{align*}
so that $\Hol_\alpha(\op{L}_0 f)\le \theta \Hol_\alpha(f)$. It follows that $\lVert \op{L}_0\rVert_{\Hol_\alpha}\le 1$.

The same computation shows that the dual operator $\op{L}_0^*$, which acts on probability measures over $\Omega$ since $\op{L}_0\one=\one$, is a $\theta$-contraction in the metric $W_\alpha$:
\begin{align*}
W_\alpha(\op{L}_0^*\mu,\op{L}_0^*\nu) &= \sup_{\Hol_\alpha(f)= 1} \Big\lvert \int \op{L}_0f \dd\mu -\int \op{L}_0f \dd\nu \Big\rvert \\
  &\le \sup_{\Hol_\alpha(f)= 1}\Hol(\op{L}_0f) W_\alpha(\mu,\nu) \\
  &\le \theta W_\alpha(\mu,\nu).
\end{align*}
Since the Wasserstein metric is complete, this ensures that there is exactly one probability measure which is invariant by $\op{L}_0^*$, which we denote by $\mu_0$.

Let us show that we can use Lemma \ref{lemm:gap}: if $\mu$ is any probability measure and $f\in\Hol_\alpha(\Omega)$ satisfies $\mu(f)=0$, 
then $f$ being continuous it must vanish at some point $x\in\Omega$.
Then for all $y\in\Omega$ we have $f(y)\le(\diam\Omega)^\alpha\Hol_\alpha(f)$.
It follows that we can apply Lemmas \ref{lemm:gap} and \ref{lemm:normofpi} with $D=1$, so that $\op{L}_0$ has a spectral gap of size $(1-\theta)/(1+\theta)$ with constant $1$ and $\lVert \pi_0\rVert \le\frac43$.

We shall apply Theorem \ref{theo:perturbation}. First observe that 
$\op{L}_\varphi = \op{L}_0(e^\varphi \cdot)$ so that, using the Banach Algebra property, we get
\[\lVert \op{L}_\varphi -\op{L}_0 \rVert_{\Hol_\alpha} = \lVert \op{L}_0((e^\varphi -\one)\cdot)\rVert_{\Hol_\alpha} \le \lVert \sum_{j=1}^\infty \frac{1}{j!}\varphi^j \rVert_{\Hol_\alpha} \le   \sum_{j=1}^\infty \frac{1}{j!}\lVert\varphi\rVert_{\Hol_\alpha}^j= e^{\lVert\varphi\rVert_{\Hol_\alpha}}-1. \]
Then, since $\one$ is the eigenfunction of $\op{L}_0$ and $\mu_0$ is a probability measure, we have $\tau_0=1$. Also observe that adding a constant to $\varphi$ simply multiply $\op{L}_\varphi$ by a scalar, not changing its spectral gap. 
Let  $\varphi\in\Hol_\alpha(\Omega)$ be a potential such that for some constant $c$ it holds
\[\lVert \varphi -c \rVert_{\Hol_\alpha} \le \log \Big(1+ \frac{\big(\frac{1-\theta}{1+\theta}\big)^2}{6\cdot\frac43(1+\frac{1-\theta}{1+\theta})}\Big)=\log \Big(1+ \frac{(1-\theta)^2}{16(1+\theta)}\Big);\]
Theorem \ref{theo:perturbation} then implies that
the transfer operator $\op{L}_\varphi=\op{L}_0(e^\varphi\cdot)$ has a spectral gap with constant $1$.

To conclude, we only have to observe that taking $c=(\sup \varphi+\inf \varphi)/2$, we must have $\lVert \varphi-c\rVert_\infty\le \frac12 (\diam\Omega)^\alpha \Hol_\alpha(\varphi)$ and thus $\lVert \varphi-c\rVert_{\Hol_\alpha} \le \frac32 (\diam\Omega)^\alpha \Hol_\alpha(\varphi)$.

\end{proof}

\begin{rema}
If we only want to know that there is some neighborhood of the line of constant potentials where the transfer operator has a spectral gap, without effective estimates, then we only need to prove a spectral gap for $\op{L}_0$ and dispense from the last half of the above proof.
\end{rema}

\begin{proof}[Proof of Theorem \ref{theo:mainHol}]
In the case of the Pommeau-Maneville family, we have $\diam\Omega=1$ and a contraction $\theta= \frac12+\frac{1}{2^{1+\alpha}}$ in $W_\alpha$ (one of the branch is $1/2$ contracting in the Euclidean metric, the other one is $1$-contracting). For $\alpha=1$, we obtain $\theta=3/4$ and observe
\[\frac{2}{3}\log\big(1+\frac{(1/4)^2}{16\times (7/4)}\big) \ge 0.0014.\]
\end{proof}

\section{Potentials with bounded $p$-variation}

For $\BV$ potentials and their relatives, we need much less geometric assumptions.
\begin{defi}\label{defi:classV}
We say that a map $T$ of a compact interval $\Omega\subset \mathbb{R}$ is of class $V$ if there are $k$ maps
$b_1,\dots,b_k:\Omega\to \Omega$ such that
\begin{enumerate}
\item each $b_j$ is monotonic from  $\Omega$ to an interval $I_j\subset\Omega$,
\item for all $x$ except possibly countably many, $T^{-1}(x) = \{b_1(x),b_2(x),\dots,b_k(x)\}$,
\item the $I_j$ have disjoint interior.
\end{enumerate}  
\end{defi}

As an example, any unimodular map whose restriction to both its monotonicity intervals is onto is of class $V$, irrespective of its derivative and of the behavior of its post-critical orbit.

Here again we use the definition of the transfer operators using the inverse branches:
\[\op{L}_{\varphi} f(x) = \frac{1}{k} \sum_{j=1}^k e^{\varphi(b_j(y)} f(b_j(x)),\]
keeping in mind that RPF measures are as expected $T$-invariant as soon as they are atomless.

\begin{theo}\label{theo:BV}
If $T$ is of class $V$, any potential $\varphi\in\BV_p(\Omega)$ such that
\[ \BV_p(\varphi) \le \frac23 \log\Big(1+ \frac{(k^{\frac1p}-1)^2}{16k^{\frac1p}(k^{\frac1p}+1)}\Big)\]
gives the transfer operator $\op{L}_{\varphi}$ acting on $\BV_p(\Omega)$ a spectral gap.
\end{theo}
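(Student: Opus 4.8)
The plan is to mirror the structure of the proof of Theorem~\ref{theo:Hol}, replacing the H\"older machinery with the bounded $p$-variation machinery and exploiting the \emph{extensivity} of $\BV_p$ in place of the backward-contraction-on-average hypothesis. First I would consider the transfer operator $\op{L}_0$ attached to the null potential, $\op{L}_0 f(x)=\frac1k\sum_{j=1}^k f(b_j(x))$, and verify that it acts continuously on $\BV_p(\Omega)$ with $\lVert\op{L}_0\rVert_{\BV_p}\le 1$. The supremum-norm bound $\lVert\op{L}_0 f\rVert_\infty\le\lVert f\rVert_\infty$ is immediate from convexity. The key estimate is a Lasota--Yorke-type inequality $\BV_p(\op{L}_0 f)\le\theta\,\BV_p(f)$ for an explicit $\theta\in(0,1)$; comparing with the statement, I expect $\theta=k^{-1/p}$.

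\smallskip

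Establishing this contraction factor is the step I expect to be the main obstacle, and it is where the class-$V$ hypotheses enter. Given a partition $\underline x$ of $\Omega$, I would write $\op{L}_0 f(x_i)-\op{L}_0 f(x_{i-1})=\frac1k\sum_{j=1}^k\big(f(b_j(x_i))-f(b_j(x_{i-1}))\big)$, apply Minkowski's inequality in $\ell^p$ (triangle inequality for the $p$-variation seminorm) to get $v_p(\op{L}_0 f,\underline x)\le\frac1k\sum_{j=1}^k v_p(f,b_j(\underline x))$, and then use that each $b_j$ is monotonic onto $I_j$ so that $b_j(\underline x)$ is a genuine partition of the subinterval $I_j$, giving $v_p(f,b_j(\underline x))\le\BV_p(f|_{I_j})$. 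The disjointness of the interiors of the $I_j$ (extensivity) then yields $\sum_{j=1}^k \BV_p(f|_{I_j})^p\le\BV_p(f)^p$; combining this with the factor $\frac1k$ via H\"older's inequality on the sum $\frac1k\sum_j \BV_p(f|_{I_j})$ produces the factor $k^{1-1/p}\cdot\frac1k\cdot\big(\sum_j\BV_p(f|_{I_j})^p\big)^{1/p}\le k^{-1/p}\BV_p(f)$, i.e. $\theta=k^{-1/p}$. One must take the supremum over $\underline x$ carefully so that the subpartitions on the various $I_j$ are controlled simultaneously; this bookkeeping, together with the handling of the countably many exceptional points from condition~\textit{(ii)}, is the delicate part.

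\smallskip

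Once the Lasota--Yorke inequality is in hand, the rest follows the template of Theorem~\ref{theo:Hol} almost verbatim. I would check the hypothesis of Lemma~\ref{lemm:gap}: any $f\in\BV_p(\Omega)$ with $\mu(f)=0$ vanishes at some point (being continuous where it matters, or at least bounded by its variation from a zero), so $\lVert f\rVert_\infty\le\BV_p(f)$, giving constant $D=1$. Since $\BV_p$ is translation-invariant, Lemmas~\ref{lemm:gap} and~\ref{lemm:normofpi} apply with $D=1$, yielding a spectral gap for $\op{L}_0$ of size $\delta_0=(1-\theta)/(1+\theta)$ and $\lVert\pi_0\rVert\le\frac43$. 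Because $\op{L}_0\one=\one$ and $\op{L}_0^*$ fixes a probability measure $\mu_0$, the condition number is $\tau_0=1$.

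\smallskip

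Finally I would feed these into the perturbation result, Theorem~\ref{theo:perturbation}. Writing $\op{L}_\varphi=\op{L}_0(e^\varphi\cdot)$ and using that $\BV_p(\Omega)$ is a Banach algebra gives $\lVert\op{L}_\varphi-\op{L}_0\rVert_{\BV_p}\le e^{\lVert\varphi\rVert_{\BV_p}}-1$, exactly as in the H\"older case. Since adding a constant to $\varphi$ only rescales $\op{L}_\varphi$ without affecting its spectral gap, I may center $\varphi$ so that the relevant norm is governed by $\BV_p(\varphi)$ alone; with $\theta=k^{-1/p}$, substituting $\delta_0=(k^{1/p}-1)/(k^{1/p}+1)$ and $\lVert\pi_0\rVert=\frac43$ into the threshold $\log\!\big(1+\delta_0^2/(6\cdot\frac43(1+\delta_0))\big)$ and simplifying reproduces $\frac23\log\big(1+(k^{1/p}-1)^2/(16k^{1/p}(k^{1/p}+1))\big)$, the stated bound. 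Theorem~\ref{theo:perturbation} then delivers the spectral gap for $\op{L}_\varphi$, completing the argument.
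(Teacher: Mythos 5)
Your proposal follows essentially the same route as the paper's proof: the zero-potential operator $\op{L}_0$, the Lasota--Yorke estimate $\BV_p(\op{L}_0 f)\le k^{-1/p}\BV_p(f)$ via Minkowski and the disjointness of the $I_j$, Lemmas \ref{lemm:gap} and \ref{lemm:normofpi} with $D=1$, $\tau_0=1$, and Theorem \ref{theo:perturbation} after centering $\varphi$ (your per-branch bound plus extensivity is only a cosmetic reorganization of the paper's single interleaved partition $\underline{y}$, and your final constant $16$ agrees with the theorem statement). One small correction: a mean-zero $\BV_p$ function need not vanish anywhere (it can be discontinuous), so the justification for $D=1$ is the paper's: $f$ is non-negative at some point and non-positive at another, hence every value of $f$ is within $\BV_p(f)$ of one of these; also, the existence of the eigenprobability $\mu_0$ should be noted to follow from compactness of $\Omega$ and the fact that $\op{L}_0^*$ preserves probability measures, since no Wasserstein contraction is available here.
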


Remark that this condition becomes \emph{less} stringent with large $k$; as $k\to\infty$ the bound goes to the limit $\frac23\log(1+1/16)\approx 0.04$. Meanwhile, for $p=1$ and $k=2$ we get the bound $\frac23\log(1+1/96)\ge 0.0069$. In particular Theorem \ref{theo:mainBV} follows immediately.

\begin{proof}
As above, we only need to consider the zero potential, whose corresponding transfer operator is again denoted by $\op{L}_0$. Given $f\in \BV_p(\Omega)$ it is again clear that $\lVert \op{L}_0 f\rVert_\infty \le \lVert f\rVert_\infty$; the core of the argument is then to prove that
\begin{equation}
\BV_p(\op{L}_0f) \le \frac{1}{k^{\frac1p}} \BV_p(f).
\label{eq:BV}
\end{equation}
This is easily achieved thanks to the ``extensiveness'' of the total $p$-variation: if $\underline{x} =(x_0<x_1<\dots<x_n)$ is any partition of $\Omega$, then
\[b_1(x_0) < \dots b_1(x_n) < b_2(x_0) < \dots b_2(x_n) <\dots < b_k(x_0) < \dots < b_k(x_n)\]
is a partition of $\Omega$, in the case when $b_j$ are all increasing and their images are in increasing order. In any other case, we only have to permute the $b_j$ and change the order of each subsequence $b_j(x_0), \dots, b_j(x_n)$ to obtain a partition, which we denote by
\[\underline{y} = (y_0 < \dots < y_{k(n+1)-1}).\]
Then using Minkowski's inequality and concavity of $x\mapsto x^{\frac1p}$ we have:
\begin{align*}
v_p(\op{L}_0 f,\underline{x})
  &= \frac{1}{k} \Big(\sum_{j=1}^n \Big\lvert \sum_{i=1}^k f(b_i(x_j)) - f(b_i(x_{j-1})) \Big\rvert^p \Big)^{\frac1p} \\ 
  &\le \frac{1}{k} \sum_{i=1}^k \Big(\sum_{j=1}^n \Big\lvert f(b_i(x_j)) - f(b_i(x_{j-1})) \Big\rvert^p \Big)^{\frac1p} \\ 
  &\le \Big(\frac1k \sum_{j=1}^n \sum_{i=1}^k \big\lvert f(b_i(x_j)) - f(b_i(x_{j-1}))\big\rvert^p \Big)^{\frac1p}\\
  &\le \frac{1}{k^{\frac1p}} v_p(f,\underline{y}) \le \frac{1}{k^{\frac1p}} \BV_p(f).
\end{align*}
By taking the upper bound over $\underline{x}$, we obtain \eqref{eq:BV}.

To apply Lemma \ref{lemm:gap}, observe that a $p$-BV function $f$ such that $\int f\dd\mu=0$ for some probability measure $\mu$ must be non-negative at some point and non-positive at some point, so that $\lVert f\rVert_\infty \le \BV_p(f)$.
The existence of an eigenprobability is ensured by the compactness of $\Omega$ and the fact that $\op{L}_0^*$ preserves the set of probability measures. We obtain that $\op{L}_0$ a spectral gap of size $(k^{\frac1p}-1)/(k^{\frac1p}+1)$ with constant $1$. Since $\BV_p(\Omega)$ is uniformly dense in the space of continuous functions, it follows that there is in fact a unique probability measure fixed by $\op{L}_0^*$.

The same computation as in the proof of theorem \ref{theo:Hol} shows that for all $\varphi\in\BV_p(\Omega)$, if there is a constant $c$ such that
\[\lVert \varphi -c \rVert_{\BV_p} \le \log \big(1+ \frac{(k^{\frac1p}-1)^2}{16k^{\frac1p}(k^{\frac1p}+1)}\big)\]
then the transfer operator $\op{L}_\varphi$ has a spectral gap. Taking $c=(\sup\varphi+\inf\varphi)/2$, we get as before $\lVert \varphi -c\rVert_{\BV_p} \le \frac32 \BV_p(\varphi)$ so that
a sufficient condition to have a spectral gap is
\[\BV_p(\varphi) \le \frac23 \log\Big(1+ \frac{(k^{\frac1p}-1)^2}{24k^{\frac1p}(k^{\frac1p}+1)}\Big).\]
\end{proof}

\section{Three shades of spectral gaps}\label{sec:gaps}

In this Section we show several possible definitions of \emph{spectral gap} are equivalent. This will enable us to ensure that the spectral gaps proved above actually match the needed assumption in \cite{GKLM}.

\subsection{Hypotheses on the function space}

We fix  a compact metric space $\Omega$ and space of functions $\fspace{X}(\Omega)$ endowed with a norm $\lVert\cdot\rVert$. As in \cite{GKLM}, we shall first assume the following hypothesis.
\begin{hypo}
$\fspace{X}(\Omega)$ is a Banach algebra of
Borel-measurable, bounded functions $\Omega\to\mathbb{R}$, which includes all constant functions, whose norm dominates the uniform norm (for some constant $A$ it holds $\lVert f \rVert \ge A\lVert f \rVert_\infty$) and such that for all positive, bounded away from zero function $f\in\fspace{X}(\Omega)$, the function $\log f$ also lies in $\fspace{X}(\Omega)$.
\label{hypo:Banach}
\end{hypo}
The last condition of $\log$ stability could be lifted up to changing the formulation of Theorem \ref{theo:SG} below (replacing $J\in\fX$ by $e^\varphi$ where $\varphi\in\fX$). The measurability of elements of $\fX$ and the domination condition $\lVert\cdot \rVert \ge A\lVert \cdot \rVert_\infty$ ensure every finite measure on $\Omega$ can be seen as a continuous linear form, i.e we have a natural map $C^0(\Omega)^*\to \fX(\Omega)^*$. We will also assume the following:
\begin{hypop}
If a finite measure $\mu$ on $\Omega$ is such that
  $\int f \dd\mu>0$ for all positive, bounded away from $0$ function $f\in\fspace{X}(\Omega)$, then $\mu$ must be a positive measure. Moreover
the set of finite positive measures is closed in $\fspace{X}(\Omega)^*$ (by which we mean that the natural map $C^0(\Omega)^*\to \fX(\Omega)^*$ has closed image).
\label{hypop:Banach}
\end{hypop}
This assumption means that $\fspace{X}(\Omega)$ is large enough to detect positivity of measures, and not to have too exotic linear forms which can be approximated by positive measures. This notably avoids trivial cases such as $\fspace{X}(\Omega)=\{\mbox{constants}\}$. To ensures one can identify finite measures with elements of $\fspace{X}(\Omega)^*$, we would need to ensure that the natural map $C^0(\Omega)^*\to \fX(\Omega)^*$ is injective. We will dispense from this, but by abuse still say that a form $\phi\in\fX(\Omega)^*$ ``is a finite (positive) measure'' whenever it is the image of a finite (positive) measure by the natural maps, irrespective of the uniqueness of this measure.

We will prove at the end of the Section that $\Hol_\alpha(\Omega)$ and $\BV_p([a,b])$ satisfy both hypotheses.

We only consider bounded linear operator $\fXO\to\fXO$ and, from now on, use $\fX$ as a shorthand for $\fXO$.

\begin{theo}\label{theo:SG}
Assume $\fX$ satisfies hypotheses \ref{hypo:Banach} and \ref{hypop:Banach} and let $\op{L}$ be an operator of the form
\[\op{L} f (x) = \int_\Omega J(y) f(y) \dd m_x(y)\]
for some positive function $J\in\fX$ and transition kernel $(m_x)_{x\in\Omega}$. Assume $\op{L}$ has positive eigendata $\lambda$, $h$. Then the three definitions \ref{gap:weak}, \ref{gap:proj} and \ref{gap:GKLM} are equivalent for $\op{L}$ (with the same gap size $\delta$, but possibly different constants $C$).
\end{theo}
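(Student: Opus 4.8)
The plan is to prove the cycle $\ref{gap:proj}\Rightarrow\ref{gap:GKLM}\Rightarrow\ref{gap:weak}\Rightarrow\ref{gap:proj}$, tracking that the size $\delta$ is carried unchanged while only the constant $C$ may degrade. The step $\ref{gap:weak}\Leftrightarrow\ref{gap:proj}$ is the classical, purely Banach-space manipulation relating a stable closed complement of $\langle u_0\rangle$ to a spectral projection: from $\ref{gap:weak}$ one lets $\op{P}$ be the projection onto $\langle u_0\rangle$ along $G$ (bounded, since $G$ is a closed hyperplane and $\langle u_0\rangle$ is one-dimensional) and sets $\op{R}:=\op{L}-\lambda\op{P}$, while conversely one takes $G:=\ker\op{P}$; in both directions stability of $G$ together with $\op{L}u_0=\lambda u_0$ yields $\op{P}\op{L}=\op{L}\op{P}=\lambda\op{P}$, hence $\op{P}\op{R}=\op{R}\op{P}=0$ and $\op{L}^n=\lambda^n\op{P}+\op{R}^n$, so that $\op{L}^n_{|G}=\op{R}^n_{|G}$ and the two decay bounds coincide. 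Before entering the cycle I would record, using the $\op{R}$-form, that the gap makes the $\lambda$-eigenspace one-dimensional, so the positive eigenfunction $h$ of the hypothesis is proportional to $u_0$: since $\op{R}^n(h-\op{P}h)=\lambda^n(h-\op{P}h)$ exhibits $h-\op{P}h$ as a $\lambda$-eigenvector inside $\ker\op{P}$, the bound $\lVert\op{R}^n\rVert\le C\lambda^n(1-\delta)^n$ forces $h=\op{P}h$. I may therefore fix $u_0=h$ throughout.

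The implication $\ref{gap:GKLM}\Rightarrow\ref{gap:weak}$ is then immediate, and is where positivity first enters. Given the eigenmeasure $\nu\in\spacem{P}(\Omega)$, the map $f\mapsto\int f\dd\nu$ is continuous on $\fX$ by the domination $\lVert\cdot\rVert\ge A\lVert\cdot\rVert_\infty$ of \ref{hypo:Banach}, so $G:=\{f:\int f\dd\nu=0\}$ is a closed hyperplane. Because $h$ is positive and bounded away from $0$ and $\nu$ is a probability measure, $\int h\dd\nu>0$, whence $\langle h\rangle\cap G=\{0\}$ and $\langle h\rangle\oplus G=\fX$. The eigenmeasure relation $\int\op{L}f\dd\nu=\lambda\int f\dd\nu$ makes $G$ stable under $\op{L}$, and the decay assumption of \ref{gap:GKLM} restricted to $f\in G$ is verbatim the bound demanded in \ref{gap:weak}, with the same $C$ and $\delta$.

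The substantive step is $\ref{gap:proj}\Rightarrow\ref{gap:GKLM}$, where I must produce an honest eigenprobability from the abstract projection $\op{P}$. Writing $\op{P}f=\phi(f)\,h$ for the unique $\phi\in\fX^*$ with $\phi(h)=1$, the identity $\op{P}\op{L}=\lambda\op{P}$ gives $\op{L}^*\phi=\lambda\phi$ and $\ker\phi=\ker\op{P}=G$; everything reduces to showing that $\phi$ is represented by a probability measure. First, $\op{L}$ is positive (as $J>0$, the $m_x$ are positive, and $\lambda>0$), and from $\op{L}^n=\lambda^n\op{P}+\op{R}^n$ with $\lVert\lambda^{-n}\op{R}^n\rVert\le C(1-\delta)^n\to0$ one gets $\lambda^{-n}\op{L}^n\to\op{P}$ in operator norm; for fixed $f\ge0$ each $\lambda^{-n}\op{L}^n f\ge0$, and $\fX$-convergence forces $\lVert\cdot\rVert_\infty$- hence pointwise convergence, so $\op{P}f=\phi(f)h\ge0$ and thus $\phi(f)\ge0$. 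To turn the positive form $\phi$ into a measure I would dualize the same limit: for any $m\in\spacem{P}(\Omega)$ one has $\lambda^{-n}(\op{L}^*)^n m\to\op{P}^*m=\bigl(\int h\dd m\bigr)\phi$ in $\fX^*$, where each $\lambda^{-n}(\op{L}^*)^n m$ is a genuine finite positive measure (the integral form of $\op{L}$ makes $\op{L}^*$ send measures to measures, and positivity preserves the cone). Since $\int h\dd m>0$, the closedness of the cone of finite positive measures from \ref{hypop:Banach} exhibits $\phi$ as a finite positive measure; normalizing $\nu:=\phi/\phi(\one)$ (with $\phi(\one)>0$, as $h\le\lVert h\rVert_\infty\one$ gives $\phi(\one)\ge\phi(h)/\lVert h\rVert_\infty>0$) yields a probability eigenmeasure with $\ker\nu=G$. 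Finally, $\int f\dd\nu=0$ means $f\in\ker\op{P}$, so $\op{L}^n f=\op{R}^n f$ and $\lVert\op{L}^n f\rVert\le C\lambda^n(1-\delta)^n\lVert f\rVert$, which is exactly \ref{gap:GKLM}.

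The main obstacle is precisely this realization of the abstract eigenform $\phi\in\fX^*$ as a bona fide probability measure on $\Omega$: mere positivity of $\phi$ is not enough, since $\fX^*$ could a priori contain positive forms that are not measures. What makes it work is the conjunction of the positivity of the transfer operator (transported to $\op{P}$ through the norm limit $\lambda^{-n}\op{L}^n\to\op{P}$) with the closedness of the positive-measure cone granted by \ref{hypop:Banach}, the domination in \ref{hypo:Banach} being what lets measures act continuously and lets $\fX$-convergence dominate pointwise values. I would expect the verification that $\Hol_\alpha(\Omega)$ and $\BV_p([a,b])$ actually satisfy \ref{hypop:Banach} — postponed in the text to the end of the section — to be the only genuinely delicate ingredient underlying the argument.
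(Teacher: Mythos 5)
Your proof is correct, and it rests on the same decisive mechanism as the paper's: the abstract eigenform is exhibited as the dual-norm limit of the genuine positive measures $\lambda^{-n}\op{L}^{*n}m$ (with $m$ a probability measure; the integral form of $\op{L}$ keeps these iterates inside the cone of finite positive measures), and the closedness assumption in \ref{hypop:Banach} then forces the limit to be a finite positive measure. The packaging, however, is genuinely different. The paper proves \ref{gap:weak}$\Rightarrow$\ref{gap:GKLM} through an abstract cone-duality statement (Lemma \ref{lemm:duality}): it introduces the open positive cone $P$ of functions bounded away from $0$, shows $G\cap P=\varnothing$ and that the dual eigenvector $\phi_0$ lies in the dual cone $P^*$, and then uses the $\op{L}^*$-invariant subspace $\fspace{Y}$ of measures, with $\overline{P^*}\cap\fspace{Y}$ closed, to conclude $\phi_0\in P^*\cap\fspace{Y}$. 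You instead start from \ref{gap:proj}, write $\op{P}f=\phi(f)\,h$, and push everything through the operator-norm limit $\lambda^{-n}\op{L}^n\to\op{P}$ and its dual; this bypasses the cone formalism entirely and is more elementary and self-contained for the operator at hand, at the cost of the reusable generality of the paper's lemma (which applies to arbitrary positive cones and invariant subspaces of $\fspace{X}^*$). You also make explicit two points the paper leaves implicit: that the spectral gap forces $h\in\langle u_0\rangle$, so the positive eigendata can serve as the eigenvector of the gap (the paper silently substitutes $h$ for $u_0$ when asserting $\fspace{Y}\not\subset h^\perp$); and that the convergence toward the eigenform holds in dual norm, which is what the closedness in \ref{hypop:Banach} actually requires (the paper's displayed computation only records weak-$\ast$ convergence, though the gap does yield the norm statement). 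One small redundancy: your paragraph deriving $\phi(f)\ge 0$ for $f\ge 0$ from pointwise positivity of $\lambda^{-n}\op{L}^n f$ is not needed, since positivity of $\phi$ comes for free once it is realized as a positive measure; it is harmless and serves as motivation.
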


Much of what follows is either classical or of classical flavor, but Theorem \ref{theo:SG} seems not obvious enough for us to dispense from a proof, and we need some slightly non-standard version of common concepts (e.g. we will use cones that are not needed to be convex, but are open). Note that we work with real Banach spaces; the above definitions of a spectral gap, e.g. \ref{gap:weak}, are nonetheless well-known to be equivalent to the \emph{complex} spectrum to be contained in $\mathbb{D}_{\lambda(1-\delta)}\cup\{\lambda\}$, where $\mathbb{D}_r$ is the closed disk in the complex plane, centered at $0$ and of radius $r$.

\begin{proof}
Since \ref{gap:weak} is well-known to be equivalent to \ref{gap:proj} and \ref{gap:GKLM} clearly implies \ref{gap:weak} under our hypotheses (simply take $G=\ker\mu$), the part of Theorem \ref{theo:SG} that needs a proof is that \ref{gap:weak} implies \ref{gap:GKLM}. Assume $\op{L}$ satisfies \ref{gap:weak}.

The dual operator $\op{L}^*$ acting on $\fspace{X}^*$ by
$\op{L}^*\phi(u) := \phi(\op{L}u)$ preserves the line $G^\perp:=\{\phi\in\fspace{X}^* \mid \ker\phi\supset G\}$ and thus has an eigenvector $\phi_0$, which up to a multiplicative constant encodes the projection on $\langle u\rangle$ along $G$; moreover $\phi_0$ has the same eigenvalue $\lambda$ as $u_0$.

Similarly, $\op{L}^*$ must preserve the hyperplane $u_0^\perp:=\{\phi\in\fspace{X}^* \mid \phi(u_0)=0\}$ and one checks easily \ref{gap:weak} for $\op{L}^*$, with $u_0^\perp$ in the role of $G$ and the same spectral gap $\delta$. In particular, $\lambda$ is a simple isolated eigenvalue of $\op{L}^*$. Moreover 
for all $\phi\in\fspace{X}^*$ and all $u\in\fspace{X}$, writing $u=au_0+g$ its decomposition along $\langle u\rangle\oplus G$, we have
\begin{align*}
\Big(\frac{1}{\lambda^n}\op{L}^{*n}\Big)\phi(au_0+g) 
  &=\phi\Big(au_0+\frac{1}{\lambda^n}\op{L}^{n}g \Big) \\
  &\to a\phi(u_0)
\end{align*}

Now, to apply this in our context we will want $\phi_0$ to be a measure, even if $\fspace{X}^*$ is larger than the space of measures (which often happens since $\fspace{X}$ usually is quite smaller than the space of continuous functions). We will also need that $\phi_0$ be a \emph{positive} measure, let us thus use the additional structure of $\fspace{X}$ as a space of real functions. We say that $P\subset\fspace{X}$ is a \emph{cone} if it is stable under multiplication by all positive scalar. We say it is a \emph{positive cone} if it is a cone, is open in the subspace it generates, and $\overline{P}\cap(-P)=\varnothing$. The openness assumption is to be noted: it is really used below, and somewhat restricts the scope of application of this method (e.g. in the space of $L^1$ real functions, the subset of positive functions is not open but generates the whole space).

Any positive cone has a dual
$P^*:=\{\phi\in\fspace{X}^* \mid \forall u\in P, \phi(u)> 0\}$
which is a cone of $\fspace{X}^*$ (but we do not claim it to be a positive cone) whose closure is, as soon as $P^*$ is not empty, $\overline{P^*}=\{\phi\in\fspace{X}^* \mid \forall u\in P, \phi(u)\ge 0\}$.

\begin{lemm}\label{lemm:duality}
If $P$ is a positive cone of $\fspace{X}$ which is preserved by $\op{L}$,
then $P^*$ is preserved by $\op{L}^*$. If additionally $u_0\in P$, then $G\cap P=\varnothing$ and up to multiplying it by $-1$, $\phi_0\in P^*$.

If additionally $\fspace{Y}\subset \fspace{X}^*$ is a subspace preserved by $\op{L}^*$ such that $\overline{P^*}\cap\fspace{Y}$ is closed in $\fspace{X}^*$ then either $\fspace{Y}\subset u_0^\perp$ or $\phi_0\in P^*\cap \fspace{Y}$.
\end{lemm}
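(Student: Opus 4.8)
The backbone of all three parts is the convergence of the normalised iterates toward the rank-one spectral projection, already recorded in the computation preceding the lemma: normalising $\phi_0$ so that $\phi_0(u_0)=1$, one has for every $u\in\fX$ that $\lambda^{-n}\op{L}^n u\to\phi_0(u)\,u_0$ in norm, and dually $\lambda^{-n}\op{L}^{*n}\phi\to\phi(u_0)\,\phi_0$ for every $\phi\in\fX^*$ (the remainder landing in the stable complement $G$, respectively $u_0^\perp$, where it is contracted by \ref{gap:weak}). The first assertion of the lemma is then immediate: if $\phi\in P^*$ and $u\in P$, then $\op{L}u\in P$ since $P$ is $\op{L}$-stable, hence $(\op{L}^*\phi)(u)=\phi(\op{L}u)>0$; as this holds for all $u\in P$, we get $\op{L}^*\phi\in P^*$.

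For the second assertion, fix $u\in P$. Since $P$ is a cone preserved by $\op{L}$, every iterate $\lambda^{-n}\op{L}^n u$ lies in $P$, so the limit $\phi_0(u)u_0$ lies in $\overline P$. Were $\phi_0(u)<0$, this limit would also lie in $-P$ (because $u_0\in P$), contradicting $\overline P\cap(-P)=\varnothing$; hence, after fixing the sign of $\phi_0$ by the normalisation $\phi_0(u_0)=1$, we get $\phi_0(u)\ge 0$ for every $u\in P$. To upgrade this to strict positivity I would use that $P$ is open in the subspace it spans: if $\phi_0(u)=0$ for some $u\in P$, then $u-\varepsilon u_0\in P$ for small $\varepsilon>0$ (as $u_0$ lies in that subspace and $P$ is open in it), yet $\phi_0(u-\varepsilon u_0)=-\varepsilon<0$, a contradiction. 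Thus $\phi_0\in P^*$, and finally $G\cap P=\varnothing$, since $\phi_0$ vanishes on $G$ but is strictly positive on $P$. This is exactly the step the author flags as using openness in an essential way.

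The third part is where the work lies. Assuming $\fY\not\subset u_0^\perp$, I want a positive multiple of $\phi_0$ to land in $\fY$; combined with $\phi_0\in P^*$ this yields $\phi_0\in P^*\cap\fY$. The natural idea is to iterate: the set $\overline{P^*}\cap\fY$ is a convex cone, it is preserved by $\op{L}^*$ (by the first part $\op{L}^*$ preserves $P^*$, hence by continuity $\overline{P^*}$, and it preserves $\fY$ by hypothesis), and it is closed by hypothesis; so if $\chi\in\overline{P^*}\cap\fY$ is a \emph{seed} with $\chi(u_0)\neq 0$, then $\lambda^{-n}\op{L}^{*n}\chi\in\overline{P^*}\cap\fY$ for all $n$ and converges to $\chi(u_0)\phi_0$, which therefore lies in the closed set $\overline{P^*}\cap\fY\subset\fY$, finishing the proof. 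The main obstacle is precisely the production of such a seed out of the bare assumption $\fY\not\subset u_0^\perp$: a generic $\psi\in\fY$ with $\psi(u_0)\neq 0$ need not lie in $\overline{P^*}$, and its normalised iterates, although converging to $\psi(u_0)\phi_0\in P^*$, may approach that limit from outside $\overline{P^*}$ (this already happens in two dimensions when $\phi_0$ sits on the boundary of $\overline{P^*}$), so one cannot simply invoke closedness of $\fY$. I would therefore argue by contradiction, aiming to show that $\overline{P^*}\cap\fY\subset u_0^\perp$ forces $\fY\subset u_0^\perp$: the closedness of $\overline{P^*}\cap\fY$, together with the strict positivity of $\phi_0$ secured in the second part, should allow one to separate $\phi_0$ from $\overline{P^*}\cap\fY$ and to transport this separation, via the dual convergence above, into the linear constraint $\phi(u_0)=0$ on all of $\fY$. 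Making this separation rigorous inside $\fX^*$ — where one must be careful about which topology's continuous functionals are available — is the delicate point I expect to absorb most of the effort.
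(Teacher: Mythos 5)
Your first two assertions are proved correctly and by essentially the paper's own argument, merely reordered: the paper establishes $G\cap P=\varnothing$ first (openness of $P$ plus the limit $\lambda^{-n}\op{L}^n(au_0+g)\to au_0$) and then reads off the sign of $\phi_0$, whereas you prove $\phi_0\ge 0$ on $P$ by the limit argument, upgrade to strict positivity by openness, and deduce $G\cap P=\varnothing$ as a corollary; the ingredients are identical and both orderings are valid.

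Your third part, however, is not a proof, and you say so yourself: everything is reduced to producing a seed $\chi\in\overline{P^*}\cap\fY$ with $\chi(u_0)\neq 0$, no such seed is produced, and the fallback ``separation'' sketch cannot be carried out (a functional separating $\phi_0$ from the closed set $\overline{P^*}\cap\fY$ lives in $\fX^{**}$, and nothing in the sketch converts such a separation into the linear constraint $\fY\subset u_0^\perp$). This is a genuine gap --- but the obstruction you isolated is real and cannot be overcome: the third assertion is \emph{false} as stated, and the paper's own proof makes exactly the step you refused to make, namely iterating an arbitrary $\phi\in\fY\setminus u_0^\perp$ and invoking closedness of $\overline{P^*}\cap\fY$ on iterates that have no reason to lie in that set. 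For a counterexample, take $\fX=\Hol_1([0,1])$, $\op{L}$ the zero-potential transfer operator of the doubling map $T(x)=2x \bmod 1$ (so $u_0=\one$, $\lambda=1$, $\phi_0=\mathrm{Leb}$, and $P$ the cone of positive functions bounded away from $0$), and set
\[\fY:=\lspan\{\op{L}^{*n}\mu_1 \,:\, n\ge 0\},\qquad \mu_1:=2\delta_{1/3}-\delta_{2/3}.\]
Since $\{1/3,2/3\}$ is a $2$-cycle of $T$, every element of $\fY$ is an atomic measure carried by $\bigcup_n T^{-n}\{1/3,2/3\}$; moreover, for $\nu=\sum_{n\le N}c_n\op{L}^{*n}\mu_1$ with $c_N\neq 0$, the atoms at points of $T^{-(N-1)}(1/6)$ and of $T^{-(N-1)}(5/6)$ receive contributions from the $n=N$ term only, with weights $2c_N2^{-N}$ and $-c_N2^{-N}$ respectively, so $\nu\ge 0$ forces $c_N=0$; by downward induction the only positive measure in $\fY$ is $0$, hence $\overline{P^*}\cap\fY=\{0\}$ is closed. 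Meanwhile $\fY\not\subset u_0^\perp$ since $\mu_1(\one)=1$, yet $\phi_0=\mathrm{Leb}$ is not atomic and so $\phi_0\notin\fY$: both branches of the dichotomy fail. What is true --- and what your iteration argument proves verbatim, since then the iterates do stay in the closed set $\overline{P^*}\cap\fY$ --- is the weaker dichotomy: either $\overline{P^*}\cap\fY\subset u_0^\perp$ or $\phi_0\in P^*\cap\fY$. This weaker form still suffices for Theorem \ref{theo:SG}: there $\overline{P^*}\cap\fY$ is the set of finite positive measures (which is exactly what \ref{hypop:Banach} and the closing Proposition show to be closed), and the seed exists trivially, e.g.\ any Dirac mass satisfies $\delta_x(h)\ge\inf h>0$; equivalently, Jordan decomposition writes any $\phi\in\fY$ as $\phi^+-\phi^-$ with $\phi^\pm\in\overline{P^*}\cap\fY$, and $\phi(u_0)\neq 0$ forces $\phi^\pm(u_0)>0$ for at least one of the two parts.
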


\begin{proof}
Let $\phi\in P^*$ and $u\in P$. Then $\op{L}^*\phi(u)=\phi(Lu)$; since $Lu\in P$ this is a positive number. Thus $\op{L}^*\phi\in P^*$, and $\op{L}^*$ preserves $P^*$.

Assume now $u_0\in P$. If there exist some $g\in G\cap P$, then since $P$ is open in the space it generates there is some $a<0$ such that $au_0+g\in P$. Then for all $n$ we have $\lambda^{-n}\op{L}^n(au_0+g)\in P$, which converges to $au_0\in -P$. But that would give an element of $\overline{P}\cap -P$, which is excluded by the definition. Thus $G\cap P=\varnothing$.

Next we need to prove that choosing well the dual eigenvector,
for all $u\in P$ it holds $\phi_0(u)>0$.
By construction, up to a multiplicative constant we can assume that
$\phi_0(au_0+g)=a$ for all $a\in\mathbb{R}$ and $g\in G$.
If $u=au_0+g$ is in $P$, then $a<0$ is excluded as otherwise
$au_0=\lim \lambda^{-n}\op{L}^nu$ would be an element of $\overline{P}\cap -P$; and $a=0$ is excluded by $G\cap P=\varnothing$. Therefore $a>0$ and
$\phi_0\in P^*$.

Last, assume there exists $\phi\in\fspace{Y}\setminus u_0^\perp$ and write $\phi=b\phi_0+\psi$ where $\psi\in u_0^\perp$. Then
\[\Big(\frac{1}{\lambda^n}\op{L}^{*n}\Big)\phi = b\phi_0+\Big(\frac{1}{\lambda^n}\op{L}^{*n}\Big)\psi \to b\phi_0.\]
Since $\overline{P^*}\cap\fspace{Y}$ is closed and $b$ is nonzero (as $\phi\notin u_0^\perp$), we conclude that $\phi_0\in\fspace{Y}$. We already know it is in $P^*$, finishing the proof.
\end{proof}

Let us now finish the proof of Theorem \ref{theo:SG}: we consider the set $P$ consisting in all positive functions $\Omega\to\mathbb{R}$ in $ \fX$ which are bounded away from $0$. It is a cone, and since
the norm dominates the uniform norm it is open in $\fX$. An element of $\overline{P}\cap(-P)$ would be a non-negative and positive function, thus $P$ is a positive cone. The dual cone $P^*$ is the set of continuous linear forms of $\fX$ which are positive on all elements of $P$; denoting by $\fspace{Y}$ the subspace of $\fX^*$ consisting of finite measures, assumption \ref{hypop:Banach} ensures that $\overline{P^*}\cap \fspace{Y}$ is closed in $\fspace{X}^*$. The form of $\op{L}$ ensures that $\fspace{Y}$ is preserved by $\op{L}^*$. It is not true that $\fspace{Y}\subset h^\perp$, therefore
Lemma \ref{lemm:duality} ensures that $\op{L}^*$ has an eigenvector in $P^*\cap\fspace{Y}$, which by \ref{hypop:Banach} must be a positive measure. It is not zero since $\phi_0(u_0)\neq 0$, and up to proper scaling we can make it a probability measure. Then it is easily checked that $G$ must be the kernel of this eigenprobability, and \ref{gap:GKLM} holds.
\end{proof}

\begin{prop}
Let $\Omega$ be a compact metric space and $\fX$ be a Banach space of functions $\Omega\to\mathbb{R}$ satisfying \ref{hypo:Banach}. If $\fX$ is dense (for the uniform norm) in the space $C^0(\Omega)$ of continuous functions, then $\fX$ also satisfies \ref{hypop:Banach}.
\end{prop}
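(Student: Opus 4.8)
The plan is to verify the two clauses of \ref{hypop:Banach} in turn, each time exploiting the uniform-norm domination $\lVert\cdot\rVert\ge A\lVert\cdot\rVert_\infty$ coming from \ref{hypo:Banach} together with the uniform density of $\fX$ in $C^0(\Omega)$. Throughout I write $\iota\colon C^0(\Omega)^*\to\fX^*$ for the natural restriction sending a finite signed Borel measure $\mu$ to the form $f\mapsto\int f\dd\mu$ (well defined since elements of $\fX$ are bounded and Borel), and $\mathcal M^+(\Omega)$ for the finite positive measures.

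For the positivity-detection clause I start from a finite signed measure $\mu$ with $\int f\dd\mu>0$ for every $f\in\fX$ that is positive and bounded away from $0$, and I aim to prove $\int g\dd\mu\ge0$ for every continuous $g\ge0$, which by Riesz representation forces $\mu\ge0$. Testing against $\one\in\fX$ already gives $\mu(\Omega)>0$. Then, for $g\in C^0(\Omega)$ with $g\ge0$ and any $\varepsilon>0$, the function $g+\varepsilon$ is continuous and bounded below by $\varepsilon$; by density I choose $f\in\fX$ with $\lVert f-(g+\varepsilon)\rVert_\infty<\delta$, so that for $\delta<\varepsilon$ the approximant $f$ is itself bounded away from $0$, whence $\int f\dd\mu>0$. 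Since $\mu$ is finite, $\big\lvert\int f\dd\mu-\int(g+\varepsilon)\dd\mu\big\rvert\le\delta\,\lvert\mu\rvert(\Omega)$, and letting first $\delta\to0$ and then $\varepsilon\to0$ (using $\mu(\Omega)<\infty$) yields $\int g\dd\mu\ge0$. This part is routine.

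For the closedness clause I take $\phi$ in the $\fX^*$-closure of $\iota(\mathcal M^+(\Omega))$, fix $\mu_n\in\mathcal M^+(\Omega)$ with $\iota(\mu_n)\to\phi$ in operator norm, and must exhibit a positive measure mapping to $\phi$. First I would bound the masses: as $\one\in\fX$ has finite norm, $\mu_n(\Omega)=\iota(\mu_n)(\one)$ is Cauchy, hence bounded, say $\mu_n(\Omega)\le M$. Since $\Omega$ is a compact metric space, the positive measures of total mass at most $M$ are weak-$*$ sequentially compact and metrizable, so some subsequence satisfies $\mu_{n_k}\to\mu$ weak-$*$ with $\mu\in\mathcal M^+(\Omega)$. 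The two candidate limits agree on continuous functions: for $g\in C^0(\Omega)$ weak-$*$ convergence gives $\int g\dd\mu=\lim_k\int g\dd\mu_{n_k}$, while for $g\in C^0(\Omega)\cap\fX$ the norm convergence gives $\lim_k\int g\dd\mu_{n_k}=\phi(g)$. Moreover the mass bound and the domination show $\lvert\phi(f)\rvert\le M\lVert f\rVert_\infty$, so $\phi$ is uniform-norm-continuous, and $\phi$ and $\iota(\mu)$ are two such forms agreeing on the uniformly dense subspace $\fX\cap C^0(\Omega)$.

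The hard part is to upgrade this agreement from continuous functions to \emph{all} of $\fX$, i.e.\ to establish $\iota(\mu)=\phi$ in $\fX^*$: the delicate point is that $\fX$ typically contains discontinuous functions (jumps, for $\BV_p$), against which weak-$*$ convergence of the $\mu_{n_k}$ says nothing, so the bare weak-$*$ limit need not reproduce $\phi(f)$ for such $f$. The resolution must exploit that $\iota(\mu_n)\to\phi$ holds in operator norm, not merely weak-$*$: norm convergence of positive measures is a strong tightness condition that should force the weak-$*$ limit to capture $\phi$ even on discontinuous test functions. Concretely, I expect to write $\psi:=\phi-\iota(\mu)=\lim_n\iota(\mu_n-\mu)$ as a norm-limit of images of uniformly-bounded-mass, weak-$*$-null signed measures that vanishes on $C^0(\Omega)$, and then argue that such a $\psi$ must vanish identically. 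This is the genuine crux, and is where the structure of the concrete spaces enters: for $\BV_p$, for instance, the $\fX^*$-norm is comparable to uniform control of the ``distribution functions'' $t\mapsto\nu([a,t])$, and uniform convergence of these does control $\int f\dd\nu$ for every bounded-variation $f$. Once $\iota(\mu)=\phi$ is secured, $\phi\in\iota(\mathcal M^+(\Omega))$ and closedness follows.
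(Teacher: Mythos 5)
Your proof of the first clause of \ref{hypop:Banach} is correct and is essentially the paper's own argument. For the closedness clause your setup also matches the paper's: the uniform mass bound, the sup-norm continuity of $\phi$, and a candidate measure $\mu$ agreeing with $\phi$ on continuous test functions (the paper produces $\mu$ by extending $\phi$ through the uniform density of $\fX$ in $C^0(\Omega)$ and applying Riesz representation, rather than by weak-$*$ compactness; both constructions yield the same measure). But you stop at the identity $\phi=\iota(\mu)$ on all of $\fX$, explicitly leaving it unproved, so as written your proposal is incomplete.

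You should know, however, that the step you declined to claim is exactly the step the paper asserts without justification: its proof reads ``Since $\mu_n(f)\to\mu(f)$ for all $f\in\fX$\ldots'', while what is actually known is that $\mu_n(f)\to\phi(f)$ for $f\in\fX$ and that $\int g\,\dd\mu=\phi(g)$ for $g$ reached through $C^0(\Omega)$; for a discontinuous $f\in\fX$ nothing equates $\phi(f)$ with $\int f\,\dd\mu$. Moreover, this cannot be repaired at the stated level of generality, because the Proposition is \emph{false} as stated. Take $\Omega=[0,1]$ and
\[\fX=\big\{g+c\,\one_{\{0\}}\ :\ g\in\Lip([0,1]),\ c\in\mathbb{R}\big\},\qquad \lVert g+c\,\one_{\{0\}}\rVert:=\lVert g\rVert_\infty+\Lip(g)+\lvert c\rvert.\]
One checks this is a Banach algebra of bounded Borel functions containing the constants, dominating the sup norm, and stable under $\log$ (if $g+c\,\one_{\{0\}}\ge\varepsilon>0$ then $g\ge\varepsilon$ by continuity, and $\log(g+c\,\one_{\{0\}})=\log g+\big(\log(g(0)+c)-\log g(0)\big)\one_{\{0\}}$); it contains $\Lip([0,1])$, hence is uniformly dense in $C^0([0,1])$. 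So \ref{hypo:Banach} and the density hypothesis hold. Now let $\mu_n=\delta_{1/n}$ and $\phi(g+c\,\one_{\{0\}}):=g(0)$. Then $\lvert\mu_n(f)-\phi(f)\rvert=\lvert g(1/n)-g(0)\rvert\le\lVert f\rVert/n$, so $\iota(\mu_n)\to\phi$ in $\fX^*$; yet $\phi$ is not represented by any finite measure $\nu$, since testing against Lipschitz functions forces $\nu=\delta_0$ while testing against $\one_{\{0\}}$ forces $\nu(\{0\})=0$. In this example the paper's $\mu$ is $\delta_0$ and its claim fails at $f=\one_{\{0\}}$: $\mu_n(f)=0$ for all $n$ but $\mu(f)=1$. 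Hence the image of the positive measures is not closed in $\fX^*$, and \ref{hypop:Banach} fails.

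So your instinct that ``the structure of the concrete spaces enters'' is precisely right, and what can be saved is the Corollary, proved space by space. For $\Hol_\alpha(\Omega)$ the gap is vacuous, since $\Hol_\alpha(\Omega)\subset C^0(\Omega)$ and weak-$*$ convergence already gives $\mu_n(f)\to\mu(f)$ for every $f\in\fX$. For $\BV_p$ your sketch is the correct repair: the indicators $\one_{[a,t]}$ have $\BV_p$-norm at most $2$, so norm convergence of $\iota(\mu_n)$ forces uniform convergence of the distribution functions $F_n(t)=\mu_n([a,t])$ (and of their left limits) to a nondecreasing, right-continuous $F$, which is the distribution function of a positive measure $\mu$; then $\mu_n(J)\to\mu(J)$ for every subinterval $J$, hence $\int s\,\dd\mu_n\to\int s\,\dd\mu$ for every step function $s$, and since $\BV_p$ functions are regulated (uniform limits of step functions) while the masses stay bounded, $\int f\,\dd\mu_n\to\int f\,\dd\mu$ for every $f\in\BV_p$, i.e.\ $\phi=\iota(\mu)$ with $\mu$ positive, as required.
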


\begin{proof}
We prove a slight strengthening of the first property.
Let $\mu$ be a finite measure on $\Omega$ such that for all $f\in\fX$ bounded away from $0$, $\mu(f) := \int f\dd \mu \ge 0$ and let $g$ be any continuous positive function. Let $f_n\in \fX$ be such that $\lVert g-f_n\rVert_\infty\to 0$; since $\mu$ is finite, it follows that $\mu(f_n)\to\mu(g)$. By compactness $\inf g>0$ thus for $n$ large enough, $f_n$ is bounded away from $0$ and $\mu(f_n)\ge 0$. It follows that $\mu(g)\ge 0$, and $\mu$ is a positive measure.

Now let $(\mu_n)_{n\in\mathbb{N}}$ be a sequence of finite positive measures, and assume there is a continuous linear form $\phi\in\fX^*$ such that $\lVert\mu_n-\phi\rVert^*\to0$ (where $\mu_n$ are identified with elements of $\fX^*$ and $\lVert\cdot\rVert^*$ is the dual norm on $\fX^*$). We have to prove that $\phi$ is a finite positive measure.

First, $\lim \mu_n(\one) = \phi(\one)\in\mathbb{R}$ ensures that the total mass of $\mu_n$ is bounded by some $M>0$, uniformly in $n$, i.e. for all bounded measurable $f$ (in particular, $f\in\fX$) it holds $\lvert\mu_n(f)\rvert \le M\lVert f\rVert_\infty$ (positivity of the $\mu_n$ is crucial here).

Consider any non-zero $f\in\fX$. There exist an $n\in\mathbb{N}$ such that $\lVert\mu_n-\phi\rVert^*\le \lVert f\rVert_\infty/\lVert f\rVert$, and then
\begin{align*}
\lvert \phi(f)\rvert &\le \lvert \phi(f)-\mu_n(f)\rvert + \lvert \mu_n(f)\rvert \\
  &\le (1+M)\lVert f\rVert_\infty
\end{align*}
so that $\phi$ is continuous in $\lVert\cdot\rVert_\infty^*$ norm on $\fX$. It then admits an extension $\mu \in C^0(\Omega)^*$, which by the Riesz representation theorem is a finite measure. Since $\mu_n(f)\to \mu(f)$ for all $f\in\fX$ and the $\mu_n$ are positive, we have $\mu(f)\ge 0$ for all non-negative $f\in\fX$. In particular we can apply the strengthened first property proved above to deduce that $\mu$ is positive. In other words, $\phi$ identifies with a finite positive measure, as needed.
\end{proof}

It follows easily (using that $\log$ is locally Lipschitz and that $\Hol_\alpha(\Omega)$ is uniformly dense in the space of continuous fonctions \cite{Georganopoulos}):
\begin{coro}
For all compact metric space $\Omega$, the Banach space $\Hol_\alpha(\Omega)$ satisfies \ref{hypo:Banach} and \ref{hypop:Banach}.

For all bounded interval $I\subset\mathbb{R}$, the space $\BV_p(I)$ satisfies \ref{hypo:Banach} and \ref{hypop:Banach}.
\end{coro}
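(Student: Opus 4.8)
The Corollary is designed to follow immediately from the preceding Proposition: for each of the two spaces it suffices to verify hypothesis \ref{hypo:Banach} together with uniform density in $C^0(\Omega)$, and hypothesis \ref{hypop:Banach} is then handed to us for free. My plan is therefore to check the clauses of \ref{hypo:Banach} one by one and then the density statement. Most clauses are immediate from the definitions: both $\Hol_\alpha(\Omega)$ and $\BV_p(I)$ were already seen to be Banach algebras (the latter in Section \ref{sec:pBV}), both contain the constants (which have vanishing seminorm), and in both cases the norm has the shape $\lVert\cdot\rVert_\infty + V(\cdot)$ with $V\ge 0$, so it dominates the uniform norm with $A=1$. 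Elements are bounded (finiteness of the norm forces this) and Borel-measurable, being continuous in the H\"older case and having at most countably many discontinuities in the $\BV_p$ case.

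The only clause I expect to require a genuine (if short) argument is the $\log$-stability condition, and this is the step I would treat as the crux. The key remark is that postcomposition by a Lipschitz map does not inflate either seminorm by more than its Lipschitz constant: if $g:\mathbb{R}\to\mathbb{R}$ is $L$-Lipschitz, then $\Hol_\alpha(g\circ f)\le L\,\Hol_\alpha(f)$ and $\BV_p(g\circ f)\le L\,\BV_p(f)$, both obtained by inserting $\lvert g(f(x))-g(f(y))\rvert\le L\lvert f(x)-f(y)\rvert$ term by term into the suprema defining $\Hol_\alpha$ and $\BV_p$. Given a positive $f\in\fX$ bounded away from $0$, its range lies in some compact interval $[m,M]\subset(0,\infty)$ on which $\log$ is Lipschitz, so $\log f\in\fX$, as \ref{hypo:Banach} demands.

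Finally I would establish uniform density in $C^0$ and invoke the Proposition. For $\Hol_\alpha(\Omega)$ this is the classical density of H\"older (a fortiori Lipschitz) functions in the continuous functions on a compact metric space \cite{Georganopoulos}. For $\BV_p(I)$ it is enough to observe that continuous piecewise-linear functions already lie in $\BV_1(I)\subset\BV_p(I)$ and are uniformly dense in $C^0(I)$. In each case the preceding Proposition yields \ref{hypop:Banach}, which completes the proof. The argument is entirely routine; the only point that is not a one-line verification is the $\log$-stability clause, which is precisely why I would record the Lipschitz-postcomposition estimate separately.
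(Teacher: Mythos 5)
Your proof is correct and follows essentially the paper's own route: the paper's justification is a one-line remark that $\log$ is locally Lipschitz (giving the log-stability clause via exactly the postcomposition estimate you record) and that $\Hol_\alpha(\Omega)$ is uniformly dense in $C^0(\Omega)$ \cite{Georganopoulos}, after which the preceding Proposition delivers \ref{hypop:Banach}. Your only addition is to spell out the uniform density of $\BV_p(I)$ via piecewise-linear functions lying in $\BV_1(I)\subset\BV_p(I)$, a detail the paper leaves implicit.
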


\bibliographystyle{amsalpha}
\bibliography{perturbation}

\end{document}